\documentclass[a4paper,11pt]{amsart}
\usepackage{graphicx}
\usepackage{amsmath}
\usepackage{amssymb}
\usepackage{oldgerm}
\usepackage[all]{xy}

\linespread{1.3}

\newcommand{\Hom}{\operatorname{Hom}\nolimits}
\renewcommand{\Im}{\operatorname{Im}\nolimits}

\newcommand{\Coker}{\operatorname{Coker}\nolimits}

\newcommand{\id}{\operatorname{id}\nolimits}

\newcommand{\soc}{\operatorname{soc}\nolimits}
\newcommand{\rad}{\operatorname{rad}\nolimits}
\newcommand{\cone}{\operatorname{cone}\nolimits}
\newcommand{\N}{\operatorname{\mathbb{N}}\nolimits}
\newcommand{\Z}{\operatorname{\mathbb{Z}}\nolimits}
\newtheorem{theo}{Theorem}[section]
\newtheorem{cor}[theo]{Corollary}
\newtheorem{lemma}[theo]{Lemma}

\newtheorem{defi}[theo]{Definition}
\newtheorem{rem}[theo]{Remark}
\newtheorem{exa}[theo]{Example}

\begin{document}
\title{Examples of Auslander-Reiten components in the bounded derived Category}
\author{Sarah Scherotzke}
\address{Sarah Scherotzke \newline Mathematical Institute \\ University of Oxford \\ 24-29 St.\
Giles \\ Oxford OX1 3LB \\ United Kingdom}
\email{scherotz@maths.ox.ac.uk}
\date{\today}

 \maketitle

\begin{abstract}
We deduce a necessary condition for Auslander-Reiten components of the bounded derived category of a finite dimensional algebra to have
 Euclidean tree class by classifying certain types of irreducible maps in the category of complexes. This result shows that there are only finitely many Auslander-Reiten components with Euclidean tree class up to shift.
Also the Auslander-Reiten quiver of certain classes of Nakayama are computed directly and it is shown that  they are piecewise hereditary. Finally we state a condition for $\Z[A_{\infty}]$-components to appear in the Auslander-Reiten quiver generalizing a result in \cite{W}.
\end{abstract}

\section{Introduction}
Auslander-Reiten triangles have been introduced by Happel for triangulated categories generalizing
the concept of Auslander-Reiten sequences for finite-dimensional algebras.

In this paper we analyze Auslander-Reiten triangles in the bounded derived category of a finite-dimensional algebra $A$,
denoted by $D^b(A)$. Using Auslander-Reiten triangles, Happel has defined a locally
finite graph, whose vertices correspond to indecomposable
complexes in $D^b(A)$ and whose edges correspond to irreducible maps appearing in
Auslander-Reiten triangles.  The connected components of this quiver are the Auslander-Reiten components.

So far Auslander-Reiten components of $D^b(A)$ for $A$ self-injective
have been determined by Wheeler in \cite{W} and for $A$ a hereditary algebra they have been determined by Happel in \cite[I,5.5]{H1}. But
very little is known about the components of the bounded derived
category of non self-injective algebras. In this paper we compute the Auslander-Reiten quiver of the bounded derived category for a class of algebras with finite global dimension.

In \cite{S} we classified bounded derived categories whose
Auslander-Reiten quiver has either a stable component of Dynkin
tree class, or a stable finite component or a bounded component.
It is shown that a component with Dynkin tree class can only
appear in the Auslander-Reiten component of the bounded derived
category of a piecewise hereditary algebra. In this paper which can be read independently from \cite{S} we will
focus on Auslander-Reiten components with Euclidean tree class.

\smallskip

In the first section we introduce some notation and give the
definition of Auslander-Reiten triangles.  In section two we state
some properties of Auslander-Reiten triangles and show that
certain indecomposable elements lie in Auslander-Reiten components
isomorphic to $\Z[A_{\infty}]$. This generalizes Wheeler's main
result in \cite{W}.

In section three we determine a necessary condition for the existence of stable Auslander-Reiten components with Euclidean tree class.
In the last section we determine the Auslander-Reiten quivers of the bounded derived category for certain classes of algebras and use our results to show that they are piecewise hereditary.

\smallskip

In the third section irreducible maps of complexes in $Comp^{-,b}(\mathcal{P})$ that end in an indecomposable contractible
complex are analyzed.  We show that these irreducible maps start in a complex that is isomorphic in $D^b(A)$ to a simple module of $A$ embedded into $D^b(A)$. Using this result we show that a stable Auslander-Reiten component of $D^b(A)$ which has Euclidean tree class contains a simple $A$-module. It then follows that there are only finitely many Auslander-Reiten components with Euclidean tree class up to shift and this number is bounded by the number of isomorphism classes of simple modules.

\smallskip

Finally in the last section we analyze the Auslander-Reiten quivers of
Nakayama algebras given as path algebras $k A_n/I$ for some ideal $I \trianglelefteq k A_n$.
The first class are such Nakayama algebras with global dimension $n-1$. In this case the Auslander-Reiten quiver consists of only one
component $\Z[A_n]$. So by previous results the algebra $k A_n/I$ is derived equivalent to $A_n$.

The second class are the algebras where $I$ is generated
by the path of length $n-1$. In the second case the Auslander-Reiten quiver consists of only one
component $\Z[D_n]$. Therefore the algebra $ k A_n/I$ is derived equivalent to
$kD_n$.

The notation is the same as in \cite{S}. Also some definitions and results from \cite{S} will be used. For the convenience of the reader the notation and definitions are stated in this paper.

\bigskip

I would like to thank my supervisor Dr Karin Erdmann, Prof. Dieter Happel and Prof. Bernhard Keller for helpful discussions on the topic of this paper.
\section{Preliminaries and Notation}
Let $A$ denote a finite-dimensional indecomposable algebra over a field $k$ and $A$-mod the category of finite-dimensional left $A$-modules. We denote by $\mathcal{P}$ the full subcategory in $A$-mod of projective modules and $\mathcal{I}$ the full subcategory category of injective $A$-modules. Let $C \in \{$A$\mbox{-mod},\  \mathcal{P},\  \mathcal{I}\}$.

Then $Comp^{*,?}(C)$ is the category of complexes that are bounded above if $*=-$, bounded below if $*=+$ and bounded if $*=b$. The homology is bounded if $?=b$.
We denote by $D^b(A)$ the bounded derived category and by $K^{*,?}(C)$ the homotopy category.

The homotopy category and the derived categories are triangulated
categories by \cite[10.2.4, 10.4.3]{We} where the shift functor
$[1]$ is the automorphism. The distinguished triangles are given
up to isomorphism of triangles by
\[ \xymatrix{X\ar[rr]^f&&Y\ar[rr]^{0\oplus {\rm
id}_Y}&&\cone(f)\ar[rr]^{{\rm id}_{X[1]}\oplus 0}&&X[1].}\] for
any morphism $f$.

\medskip

It is difficult to calculate the morphisms in the derived
category of $A$-modules. The following theorem provides an easier way to represent them.

\begin{theo}\cite[10.4.8]{We}
We
have the following equivalences of triangulated categories
\[ \begin{array}{c}
   K^{-,b}(\mathcal{P})\cong K^{+,b}(\mathcal{I})\cong D^b(A).
\end{array}
\]
\end{theo}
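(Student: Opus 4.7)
The plan is to prove the first equivalence $K^{-,b}(\mathcal{P}) \cong D^b(A)$; the second then follows by a formally dual argument using injective resolutions in $\mathcal{I}$. The composition of the inclusion $K^{-,b}(\mathcal{P}) \hookrightarrow K^{-,b}(A\text{-mod})$ with the localization functor $K^{-,b}(A\text{-mod}) \to D^b(A)$ defines a triangulated functor $F$, and I would show that $F$ is both essentially surjective and fully faithful.

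For essential surjectivity, given $X \in D^b(A)$, I would construct a projective resolution $P \to X$ by the standard inductive procedure: starting above the top nonzero cohomology of $X$ and proceeding downwards, at each step I cover the relevant kernel by a projective module and assemble the pieces into a bounded-above complex of projectives with a quasi-isomorphism $P \to X$. Since $X$ has bounded homology, so does $P$, hence $P \in K^{-,b}(\mathcal{P})$ and $F(P) \cong X$ in $D^b(A)$.

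For full faithfulness, the essential tool is the lemma that any acyclic complex in $K^{-}(\mathcal{P})$ is contractible, equivalently any quasi-isomorphism between two objects of $K^{-}(\mathcal{P})$ is a homotopy equivalence. Using the calculus of fractions description of $\Hom$ in $D^b(A)$, any morphism $P \to Q$ in $D^b(A)$ between objects of $K^{-,b}(\mathcal{P})$ is represented by a roof $P \xleftarrow{s} Z \to Q$ with $s$ a quasi-isomorphism. Replacing $Z$ by a projective resolution $P' \to Z$, the composite $P' \to P$ is a quasi-isomorphism between two objects of $K^{-}(\mathcal{P})$, hence a homotopy equivalence by the lemma; this exhibits the given roof as equivalent to an honest morphism in $K^{-,b}(\mathcal{P})$, proving fullness. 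An entirely parallel argument, replacing a null-homotopy in $D^b(A)$ by a null-homotopy after pullback to a projective resolution, gives faithfulness.

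The main obstacle is the contractibility lemma for bounded-above acyclic complexes of projectives. It is proved by a careful inductive splitting argument: at the top nonzero degree the differential into an acyclic tail must split by projectivity of its source, and one then continues degree by degree downwards, assembling the contracting homotopy from the successive splittings. The bounded-above hypothesis is essential because the induction must start somewhere; for unbounded complexes of projectives, the analogous statement fails. Boundedness of homology is preserved under taking projective resolutions, which is what allows the above arguments to remain inside $K^{-,b}(\mathcal{P})$ rather than merely $K^{-}(\mathcal{P})$.
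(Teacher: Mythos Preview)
Your outline is correct and follows the standard textbook argument; however, the paper does not give its own proof of this statement at all. It is stated as a quotation from \cite[10.4.8]{We} and used as a black box, so there is no ``paper's proof'' to compare against. What you have sketched is essentially the argument one finds in Weibel: build a triangulated functor $K^{-,b}(\mathcal{P})\to D^b(A)$ via inclusion followed by localization, get essential surjectivity from the existence of bounded-above projective resolutions of bounded complexes, and get full faithfulness from the fact that bounded-above acyclic complexes of projectives are contractible (so quasi-isomorphisms in $K^{-}(\mathcal{P})$ are homotopy equivalences). One small remark: rather than manipulating roofs directly, it is slightly cleaner to phrase full faithfulness as the statement that for $P\in K^{-}(\mathcal{P})$ the functor $\Hom_{K^{-}}(P,-)$ annihilates acyclic complexes, which immediately gives $\Hom_{K}(P,X)\cong\Hom_{D}(P,X)$ for all $X$; your roof argument is a concrete unpacking of this.
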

We identify a $A$-module $X$ with the complex that has entry $X$ in degree $0$ and
entry $0$ in all other degrees. By abuse of notation we call this
complex $X$. A complex with non-zero entry in only one degree is also called a stalk complex.
Note that $A$-mod is equivalent to a full subcategory of $D^b(A)$ using this embedding.

\smallskip

Let $N$ be a left $A$-module and
$\cdots  \stackrel{d_P^2} \to P_1 \stackrel{d_P^1}\to P_0\to N$ its minimal projective
resolution. Let $N\to I_0 \stackrel{d_I^0} \to
I_1  \stackrel{d_I^1}\to \cdots $ be its minimal injective resolution. Then we denote throughout this paper by $pN$ the complex with $(pN)^i=P_{-i}$
and $d^i:= d_P^{-i}$ for $i \le 0$ and $(pN)^i=0$ for $i>0$.
Similarly we define $iN$ to be the complex with $(iN)^n=I_{n}$ and
$d^n:= d_I^{n}$ for $n \ge 0$ and $(iN)^n=0$ for $n<0$.

\begin{rem}\label{contractible1}
Note that all indecomposable contractible complexes in
$Comp^{-,b}(\mathcal{P})$ have up to shift the form \[ \cdots \to 0 \to P \to_{\id}
P \to 0 \to \cdots
\] for an indecomposable projective module $P$ of $A$. We denote
such a complex where $P$ occurs in degree 0 and $1$ by $\bar P$.
\end{rem}
Contractible complexes are projective objects in $Comp^{-,b}(\mathcal{P})$ (see for example \cite[2.9]{S}).

Finally we define for a complex $X$, the complex $\sigma^{\le n}(X)$ to be the complex with $\sigma^{\le n}(X)^i=X^i$ for $i \le n$ and $d_{\sigma^{\le n}(X)}^i=d^i_X$ for $i <n$ and $\sigma^{\le n}(X)^i=0$ for $i >n$. We define $\sigma^{\ge n}(X)$ analogously.

Next we introduce Auslander-Reiten theory for $D^b(A)$. We state
the existence conditions for Auslander-Reiten triangles in the
bounded derived category of a finite-dimensional algebra and prove
some properties that will be needed in the other sections.

For an introduction to triangulated categories we refer to \cite[1.1]{H}.

\begin{defi}\cite[4.1]{H1} [Auslander-Reiten triangles] \label{triangle}
A distinguished triangle $X\to_u Y  \to_v Z \to_w X[1]$ is called
an Auslander-Reiten triangle if the following conditions are
satisfied:

(1) The objects $X,\ Z$ are indecomposable

(2) The map $w$ is non-zero

(3) If $f:W \to Z$ is not a retraction, then there exists $f':
W\to Y$ such that $v\circ f'=f$.
\end{defi}

By \cite[4.2]{H1} we have the following equivalences. The condition (3) is
equivalent to

(3'') If $f:W \to Z$ is not a retraction, then $w \circ f =0$.

\smallskip

We refer to $w$ as the connecting homomorphism of an
Auslander-Reiten triangle. We say that the Auslander-Reiten
triangle $X \to Y \to Z \to X[1]$ starts in $X$, has middle term
$Y$ and ends in $Z$. Note also that an Auslander-Reiten triangle
is uniquely determined up to isomorphisms of triangles by the
isomorphism class of the element it ends or starts with.  The
Auslander-Reiten translation $\tau$ is the functor from the set of 
isomorphism classes of indecomposable objects that appear at the
end of an Auslander-Reiten triangle to the set of isomorphism classes of indecoposable objects at the start of an Auslander-Reiten triangle. The functor $\tau$ sends the isomorphism
class of $Z$ to  the isomorphism class of $X$.

The Auslander-Reiten translation of $A$-mod will be denoted by $\tau_A$ in this chapter to avoid confusion.

Analogously to the classical
Auslander-Reiten theory we can define irreducible maps, minimal maps,
 left almost split maps and right almost split maps as in \cite[1.1,1.4]{ASS}.
Irreducible maps here have the same properties as in the case of
Artin algebras. (see \cite[1.8,1.10]{ASS})

\begin{lemma}\label{irred}
Let $N, M \in D^b(A)$ and let $f:N \to M$ be an irreducible map in $D^b(A)$.

(1) Let $N \to_g Q \to  E \to TN$ be the Auslander-Reiten triangle,
then there is a retraction $s: Q \to M $ such that $f=s \circ g$.

(2) Let $L \to B \to_h  M \to TL$ be an Auslander-Reiten
triangle, then there is a section $r: N \to B $ such that $f=h
\circ r$.
\end{lemma}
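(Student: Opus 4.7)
The plan is to imitate the classical Artin algebra proof (ASS 1.8/1.10) but phrase the two factorisation axioms in the triangulated language of Definition \ref{triangle}. The two statements are dual, so I will sketch (2) in detail and then invert the arrows for (1).

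For (2) I would first observe that since $f:N\to M$ is irreducible, $f$ is in particular not a retraction (irreducible maps are neither sections nor retractions, as the paper has already noted follows from the Artin-algebra style arguments). Because $L\to B\to_h M\to TL$ is an Auslander-Reiten triangle, the map $h$ is right almost split: condition (3) of Definition \ref{triangle} tells us that any morphism $W\to M$ which is not a retraction factors through $h$. Applying this to $f$ produces an $r:N\to B$ with $f=h\circ r$. It remains to prove that $r$ is a section. Here I would invoke the characterisation of irreducible morphisms: given the factorisation $f=h\circ r$, either $h$ is a retraction or $r$ is a section. The first alternative must be excluded, which is the only slightly delicate point.

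The key observation is that the map $h$ in any Auslander-Reiten triangle cannot be a retraction. Indeed, if $h$ had a section $h':M\to B$, then applying the triangulated axioms to the distinguished triangle $L\to B\to_h M\to_w TL$ would force $w\circ h'\circ h=w\circ \id_B=0$ after identifying a direct summand, and in fact the triangle would split off $M$; this contradicts condition (2) of Definition \ref{triangle}, namely that the connecting map $w$ is non-zero. (Equivalently, a split epimorphism $h$ makes the rotated triangle split, forcing $w=0$.) Hence $h$ is not a retraction, so $r$ must be a section, proving (2).

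For (1) I would run the dual argument. The map $f$, being irreducible, is not a section; the source map $g$ in the Auslander-Reiten triangle $N\to_g Q\to E\to TN$ is left almost split by the dual of condition (3), so there is $s:Q\to M$ with $f=s\circ g$. Applying once more the characterisation of irreducible maps to the factorisation $f=s\circ g$, either $g$ is a section or $s$ is a retraction; by the same triangle-splitting argument as above (now excluding that $g$ be a section, since this would again make $w=0$), we conclude that $s$ is the desired retraction. The main obstacle in each case is the routine but essential verification that the outer maps in an Auslander-Reiten triangle are neither sections nor retractions, which reduces to non-vanishing of the connecting morphism $w$.
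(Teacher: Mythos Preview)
Your argument is correct and is exactly the adaptation of the classical Artin-algebra proof that the paper has in mind: the paper does not spell out a proof of this lemma but simply records that ``irreducible maps here have the same properties as in the case of Artin algebras (see \cite[1.8,1.10]{ASS})'', and your proposal supplies precisely those details in the triangulated setting. The one point worth noting is that the left-almost-split property of $g$ you use in part (1) is not literally contained in Definition~\ref{triangle} but is the dual statement proved in \cite[4.3]{H1}, which the paper invokes immediately afterwards in Lemma~\ref{irred2}; you may want to cite that explicitly rather than appeal to an unstated ``dual of condition (3)''.
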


We define the Auslander-Reiten quiver to be the labelled graph
$\Gamma(D^b(A))$ with vertices the isomorphism classes of
indecomposable objects. The label of an arrow $X
\stackrel{(d_{XY}, d'_{XY})} \to Y$ is defined as follows. If
there is an Auslander-Reiten triangle $X \to L \to \tau^{-1}(X)
\to  X[1]$ then $d_{XY}$ is the multiplicity of $Y$ as a direct
summand of $L$. Analogously if there is an Auslander-Reiten
triangle $\tau(Y) \to M \to Y \to \tau(Y)[1]$ ending in $Y$, then
$d'_{XY}$ is the multiplicity of $X$ as a direct summand of $M$.
We call a connected component of $ \Gamma(D^b(A))$ an
Auslander-Reiten component. For well-defindness we refer to
\cite[]{H1}.
\begin{lemma}\label{irred2} \cite [4.3, 4.5]{H1}
Let $X \to_u M \to_v  Z \to X[1]$ be an Auslander-Reiten triangle
and $M \cong M_1\oplus M_2$, where $M_1$ is indecomposable. Let
$i:M_1 \to M$ be an inclusion and $p: M\to M_1$ a projection. Then
$v\circ i: M_1\to Z$ and $p \circ u: X \to M$ are irreducible
maps.  Furthermore $u$ is minimal left almost split and $v$ is
minimal right almost split.
\end{lemma}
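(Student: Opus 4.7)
The plan is to establish the minimal almost-split properties of $u$ and $v$ first and then derive irreducibility of $v \circ i$ and (dually) $p \circ u$. That $v$ is right almost split is immediate from condition (3) of Definition~\ref{triangle}. The dual statement, that $u$ is left almost split, is obtained by applying $\Hom(-,W)$ to the triangle and using (3'') together with a rotation argument: any non-section $X \to W$ must factor through $u$.

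For right minimality of $v$, suppose $v h = v$ for some $h \in \End(M)$. The long exact sequence obtained by applying $\Hom(M,-)$ to the AR triangle forces $h - \id_M = u k$ for some $k : M \to X$. Using the Krull--Schmidt decomposition of $M$, one argues that if $h$ were not an isomorphism, then some component of $h$ at an indecomposable summand would be non-invertible, producing a strictly smaller right almost split map ending at $Z$ and contradicting the uniqueness of the AR triangle ending at $Z$. Minimality of $u$ is dual.

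Next I show $v \circ i$ is irreducible. It is not a retraction: if $(v i) s = \id_Z$ for some $s$, then $i s : Z \to M$ splits $v$, which forces the connecting map $w$ to vanish, contradicting (2). It is not a section either: if it were split mono, then since $Z$ is indecomposable and $M_1 \neq 0$, $v i$ would be an isomorphism, again splitting $v$. For the factorization property, suppose $v i = g f$ with $f : M_1 \to N$ and $g : N \to Z$. If $g$ is not a retraction, the right almost split property of $v$ produces $g' : N \to M$ with $g = v g'$. Then $v(i - g' f) = 0$, so $i - g' f = u h$ for some $h : M_1 \to X$. Applying the projection $p$ gives
\[ \id_{M_1} = p g' f + p u h \]
in the local ring $\End(M_1)$, so at least one of the summands is a unit. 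If $p g' f$ is a unit, then $f$ is split mono, as required. Otherwise $p u h$ is a unit; then $h$ is split mono between indecomposables, hence an isomorphism, so $p u$ is an isomorphism, making $u$ a section and forcing the triangle to split --- contradicting $w \neq 0$.

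The argument that $p \circ u$ is irreducible is formally dual. The main difficulty in this outline is the minimality statement: unlike the almost-split property, it does not follow immediately from the defining conditions of an AR triangle and requires the Krull--Schmidt structure of $D^b(A)$.
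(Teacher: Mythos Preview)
The paper does not give its own proof of this lemma; it is stated with a citation to \cite[4.3, 4.5]{H1} and no proof environment follows. So there is nothing in the paper to compare your argument against directly.

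Your argument for the irreducibility of $v\circ i$ is correct and is essentially Happel's: lift $g$ through the right almost split map $v$, use exactness of the triangle to factor $i-g'f$ through $u$, and then exploit locality of $\End(M_1)$ on the equation $\id_{M_1}=pg'f+puh$. The two cases are handled cleanly, and the contradiction in the second case (that $pu$ becomes an isomorphism, forcing $u$ to split) is exactly right. The dual argument for $p\circ u$ goes through the same way.

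The minimality sketch, however, has a circularity you should be aware of. You invoke ``uniqueness of the AR triangle ending at $Z$'' to rule out a strictly smaller right almost split map, but the standard proof that the middle term is determined up to isomorphism \emph{uses} right minimality of $v$. A non-circular route: from $v\phi=v$ one gets $\id_M-\phi=u\alpha$ for some $\alpha:M\to X$; then $\alpha u\in\End(X)$, and since $u$ is not a section, $\alpha u$ is not invertible, hence lies in the radical of the local ring $\End(X)$, so $\id_X-\alpha u$ is invertible. Feeding this back (via $u(\id_X-\alpha u)=\phi u$ and the analogous manipulation on the other side) and iterating, one shows $\phi$ is invertible. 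Alternatively, use the general fact, valid in any Krull--Schmidt category, that any morphism admits a right-minimal version obtained by splitting off a summand on which it vanishes, and check that right almost split maps cannot vanish on a nonzero summand because $w\neq 0$. Either way, the point you flagged as ``the main difficulty'' does need an explicit argument beyond what you wrote.
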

Let $\nu_A$ denote the Nakayama functor of $A$ and let \[\nu_A^{-1}:=\Hom_A( \Hom_k(-,k) , A).\]
We denote by $\nu$ the left derived functor of $\nu_A$ on $D^b(A)$ and by $ \nu^{-1} $
the right derived functor of $\nu^{-1}_A$. Then $\nu$ maps a
complex $X\in Comp^{b}(\mathcal{P})$ to $\nu(X) \in Comp^b(\mathcal{I})
$, where $\nu (X)^i:=\nu_A (X^i)$ and
 $d^i_{\nu (X)}:= \nu_A(d^i_X)$ for all $i\in \Z$.

\smallskip

For $D^b(A)$ the existence of Auslander-Reiten triangles have been determined.
\begin{theo}\label{existence}\cite[1.4]{H}
(1) Let $Z \in K^{-,b}(\mathcal{P})$ be indecomposable. Then there
exists an Auslander-Reiten triangle ending in $Z$ if and only if
$Z \in K^{b}(\mathcal{P})$. The triangle is of the form $\nu
(Z)[-1] \to Y \to Z \to \nu(Z)$  for some $Y \in K^{-,b}(\mathcal{P})$.

(2) Let $X \in K^{+,b}(\mathcal{I})$ be indecomposable, then there exists an Auslander-Reiten
triangle starting in $X$ if and only if $X \in
K^{b}(\mathcal{I})$. The triangle is of the form $X\to Y \to \nu^{-1}(X)[1]  \to X[1]$  for some $Y \in K^{-,b}(\mathcal{P})$.
\end{theo}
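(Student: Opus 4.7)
The plan is to construct the Auslander--Reiten triangle in part (1) from the Serre-functor property of $\nu$ on the perfect subcategory $K^b(\mathcal{P})\subseteq D^b(A)$, and to derive part (2) by the dual argument in $K^{+,b}(\mathcal{I})$.

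The first step is to establish the Serre duality isomorphism
$$\Hom_{D^b(A)}(X,Y)\;\cong\;D\,\Hom_{D^b(A)}(Y,\nu X),\qquad X\in K^b(\mathcal{P}),\ Y\in D^b(A),$$
where $D=\Hom_k(-,k)$, natural in both variables. On a stalk projective $X=P$ this reduces to the Nakayama adjunction $\Hom_A(P,\nu_A M)\cong D\Hom_A(M,P)$ at the module level; for a bounded complex of projectives it then follows by induction along the brutal truncations and totalizing the resulting triangles.

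Given an indecomposable $Z\in K^b(\mathcal{P})$, I would build the triangle as follows. Since $K^b(\mathcal{P})$ is Krull--Schmidt, $\End(Z)$ is local; choose a nonzero $k$-linear form $\pi\in D\End(Z)$ vanishing on $\rad\End(Z)$. Applying Serre duality with $X=Y=Z$, $\pi$ corresponds to a morphism $w:Z\to\nu(Z)$, which I complete to a distinguished triangle
$$\nu(Z)[-1]\longrightarrow Y\longrightarrow Z\stackrel{w}{\longrightarrow}\nu(Z).$$
To verify Definition \ref{triangle}: $\nu(Z)[-1]$ is indecomposable because $\nu:K^b(\mathcal{P})\to K^b(\mathcal{I})$ is an equivalence, $w\ne 0$ by choice of $\pi$, and for condition (3'') I would take a non-retraction $f:W\to Z$ and use naturality of the Serre pairing to identify $w\circ f\in\Hom(W,\nu Z)$ with the linear form $g\mapsto\pi(f\circ g)$ on $\Hom(Z,W)$. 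Since $f$ is not a retraction, every composite $f\circ g$ lies in $\rad\End(Z)$, so the form vanishes and $w\circ f=0$, as required.

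For necessity in (1), I would argue that if $Z\in K^{-,b}(\mathcal{P})\setminus K^b(\mathcal{P})$ is indecomposable then no distinguished triangle ending in $Z$ can satisfy (3''): any candidate connecting morphism $w:Z\to X[1]$ to a bounded object factors through a soft truncation $\sigma^{\le n}(Z)$ for $n$ sufficiently large, and the canonical map $\sigma^{\le n}(Z)\to Z$ is a non-retraction when $Z$ is unbounded, producing the required obstruction. Part (2) then follows from (1) by applying the equivalence $\nu^{-1}:K^b(\mathcal{I})\to K^b(\mathcal{P})$ or, equivalently, by running the whole construction in $A^{op}$. The main obstacle I anticipate is the necessity half --- making the truncation/factorization argument rigorous in $D^b(A)$ rather than in the category of complexes --- and that is where I would follow Happel's original derivation in \cite{H} most closely.
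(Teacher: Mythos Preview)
The paper does not prove this theorem; it is quoted verbatim from Happel \cite[1.4]{H} and used as an input. There is therefore no ``paper's own proof'' to compare against. Your sufficiency argument (Serre duality on $K^b(\mathcal{P})$, choice of a linear form annihilating $\rad\End(Z)$, verification of (3'') via naturality of the pairing) is exactly Happel's construction and is correct as stated.

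Your necessity argument, however, has two defects. First, the truncation is pointed the wrong way: an indecomposable $Z\in K^{-,b}(\mathcal{P})\setminus K^b(\mathcal{P})$ is bounded \emph{above} and unbounded \emph{below}, so $\sigma^{\le n}(Z)=Z$ for $n\gg 0$ and your ``canonical map $\sigma^{\le n}(Z)\to Z$'' is eventually the identity; the relevant non-split truncations are the $\sigma^{\ge -n}(Z)\to Z$. Second, and more seriously, you assume without justification that the target $X[1]$ of the connecting map is bounded. Nothing in Definition~\ref{triangle} forces $X$ to lie in $K^b(\mathcal{P})$ or $K^b(\mathcal{I})$, so the factorization-through-a-truncation step does not get off the ground. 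Happel's actual necessity argument proceeds differently: the existence of an Auslander--Reiten triangle ending in $Z$ makes the functor $D\Hom_{D^b(A)}(Z,-)$ representable (by $\tau Z[1]$), i.e.\ $Z$ admits a Serre dual, and one then shows that objects of $D^b(A)$ with a Serre dual are precisely the perfect complexes. You correctly flag this half as the obstacle, but the sketch you give would not close the gap.
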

From this result we deduce that the translation $\tau$ is given by
$\nu[-1]$ and $\tau$ is natural equivalence from
$K^b(\mathcal{P})$ to $K^b(\mathcal{I})$.

Let  $N$, $M  \in D^b(A)$ be two indecomposable elements and
let $f: N \to  M $ be an irreducible map. Then there is an arrow
from $N$ to $M$ in the Auslander-Reiten quiver representing $f$ if
and only if $N \in K^b(\mathcal{I})$ or $M \in K^b(\mathcal{P})$.
\section{Auslander-Reiten triangles}
In this section we establish some properties of Auslander-Reiten
triangles and definitions needed for the rest of this paper.

We call an Auslander-Reiten component $\Lambda$ stable, if $\tau$
is an automorphism on $\Lambda$. By \ref{existence} this is
equivalent to the fact that all vertices in $\Lambda$ are in
$K^b(\mathcal{I})$ and $K^b(\mathcal{P})$.

By \cite[2.2.1]{XZ}and \cite[p.206]{Rie} we have
\begin{cor}
Let $\Lambda$ be a stable Auslander-Reiten component of $D^b(A)$.
Then $\Lambda \cong \Z[T]/I$ where $T$ is a tree and $I$ is an
admissible subgroup of aut$(\Z[T])$.
\end{cor}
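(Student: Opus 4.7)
The plan is to reduce the statement to Riedtmann's structure theorem for stable translation quivers, in the form generalized to triangulated categories by Xiao--Zhu. The key point is to check that a stable Auslander-Reiten component $\Lambda$ of $D^b(A)$, in the sense defined just before the corollary, really is a connected, locally finite, stable translation quiver, so that \cite[2.2.1]{XZ} and \cite[p.206]{Rie} apply directly.

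First I would unwind the hypothesis. By assumption $\tau$ restricts to an automorphism of $\Lambda$, and by Theorem \ref{existence} this is equivalent to saying that every vertex $Z$ of $\Lambda$ belongs to $K^b(\mathcal{P})\cap K^b(\mathcal{I})$. Hence for every $Z\in \Lambda$ there is an Auslander-Reiten triangle $\tau(Z)\to Y \to Z \to \tau(Z)[1]$ ending in $Z$, and an Auslander-Reiten triangle $Z\to Y' \to \tau^{-1}(Z)\to Z[1]$ starting in $Z$. Using Lemma \ref{irred2}, the indecomposable summands of $Y$ and $Y'$ provide precisely the arrows into and out of $Z$ in $\Lambda$, and the translation $\tau$ permutes these arrow sets compatibly. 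This exhibits $\Lambda$ as a stable translation quiver in Riedtmann's sense.

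Second, I would verify local finiteness: the middle term $Y$ of any Auslander-Reiten triangle is an object of $D^b(A)$ and therefore decomposes into finitely many indecomposables by Krull--Schmidt, so each vertex of $\Lambda$ has only finitely many neighbours. Connectedness is automatic since $\Lambda$ is a connected component of $\Gamma(D^b(A))$ by definition.

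Finally I would invoke the Riedtmann structure theorem (\cite[p.206]{Rie}), which asserts that any connected stable translation quiver is isomorphic to $\Z[T]/I$ for a directed tree $T$ (the \emph{tree class}, uniquely determined up to isomorphism) and an admissible subgroup $I$ of $\operatorname{aut}(\Z[T])$; the version in \cite[2.2.1]{XZ} gives precisely the statement needed for triangulated categories. The main (and really the only) obstacle is the bookkeeping check that the combinatorial data produced by Auslander-Reiten triangles in $D^b(A)$ satisfies the axioms of a translation quiver; everything else is a direct citation.
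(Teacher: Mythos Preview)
Your proposal is correct and follows exactly the route the paper takes: the paper offers no argument beyond the line ``By \cite[2.2.1]{XZ} and \cite[p.206]{Rie} we have'', and your write-up simply spells out why those citations apply, namely that the stability hypothesis together with Theorem~\ref{existence} and Lemma~\ref{irred2} makes $\Lambda$ a connected, locally finite, stable translation quiver so that Riedtmann's structure theorem can be invoked.
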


The following lemma determines the relation between irreducible
maps, retractions and sections in $K^{-,b}(\mathcal{P})$ and
$Comp^{-,b}(\mathcal{P})$. Note that by duality the same is true
if we replace $K^{-,b}(\mathcal{P})$ by $K^{+,b}(\mathcal{I})$ and
$Comp^{-,b}(\mathcal{P})$ by $Comp^{+,b}(\mathcal{I})$.
\begin{lemma}\label{retraction}\cite[3.1]{S}
Let $ B, C \in Comp^{-,b}(\mathcal{P})$ be complexes
that are not contractible. Let $f:B \to C$ be a map of complexes.

(1) Let $C, B$ be indecomposable. The map $f$ is irreducible in $Comp^{-,b}(\mathcal{P})$ if and only
if $f$ is irreducible in $K^{-,b}(\mathcal{P})$.

(2) Let $C$ be indecomposable. The map $f$ is a retraction in $Comp^{-,b}(\mathcal{P})$ if and only
if $f$ is a retraction in $K(\mathcal{P})$.

(3)Let $B$ be indecomposable. The map $f$ is a section in $Comp^{-,b}(\mathcal{P})$ if and only if
$f$ is a section in $K^{-,b}(\mathcal{P})$.
\end{lemma}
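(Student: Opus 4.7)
The plan is to handle the three parts in order (2), (3), (1), since (1) will follow by combining (2) and (3) with a small factorisation trick. The forward implications in (2) and (3) are automatic: the canonical functor $Comp^{-,b}(\mathcal{P}) \to K^{-,b}(\mathcal{P})$ sends retractions to retractions and sections to sections, so all the work is in the reverse implications. Throughout I would use that $Comp^{-,b}(\mathcal{P})$ is Krull--Schmidt, so endomorphism rings of indecomposables are local.

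For the reverse direction of (2), fix $C$ indecomposable and non-contractible and set $E := \End_{Comp^{-,b}(\mathcal{P})}(C)$, which is local. Let $\mathfrak{n} \subseteq E$ be the ideal of null-homotopic endomorphisms. The key observation is $\mathfrak{n} \subseteq \mathrm{rad}(E)$: if some $\psi \in \mathfrak{n}$ were a unit with inverse $\chi$, then $\id_C = \psi\chi$ would itself be null-homotopic (null-homotopies form a two-sided ideal), contradicting the assumption that $C$ is non-contractible. Now if $f:B \to C$ is a retraction in $K^{-,b}(\mathcal{P})$, pick a lift $g:C \to B$ of a homotopical section; then $\phi := fg$ satisfies $\phi - \id_C \in \mathfrak{n} \subseteq \mathrm{rad}(E)$, so $\phi$ is a unit in $E$, and $s := g\phi^{-1}$ is a genuine section of $f$ in $Comp^{-,b}(\mathcal{P})$. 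Part (3) is proved by the dual argument, replacing $E$ by $\End_{Comp^{-,b}(\mathcal{P})}(B)$ and inverting $gf$ from the left.

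For (1), the forward implication combines (2) and (3) with the trivial remark that any factorisation $f = gh$ in $Comp^{-,b}(\mathcal{P})$ is also a factorisation in $K^{-,b}(\mathcal{P})$. The reverse direction requires lifting factorisations in the homotopy category to factorisations of complexes: given $f = gh$ in $K^{-,b}(\mathcal{P})$ with $Y$ the middle object, choose representatives in $Comp^{-,b}(\mathcal{P})$; the difference $f - gh$ is null-homotopic, hence factors as $\alpha\beta$ through a contractible complex $E$ (for instance $E = \mathrm{cone}(\id_B)$, a direct sum of summands of the form $\bar{P}$ as in Remark \ref{contractible1}). Then
\[ f = \bigl(g\ \ \alpha\bigr)\begin{pmatrix} h \\ \beta \end{pmatrix} \]
is an honest factorisation in $Comp^{-,b}(\mathcal{P})$ through $Y \oplus E$. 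Irreducibility of $f$ in $Comp^{-,b}(\mathcal{P})$ forces either $(g\ \alpha)$ to be a retraction or $(h\ \beta)^{T}$ a section in $Comp^{-,b}(\mathcal{P})$; passing to $K^{-,b}(\mathcal{P})$ kills the contractible summand $E$ and reduces this to $g$ being a retraction or $h$ a section in $K^{-,b}(\mathcal{P})$, as required. Applying (2), (3) to rule out that $f$ itself is a retraction or a section closes the argument.

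The main obstacle is the inclusion $\mathfrak{n} \subseteq \mathrm{rad}(E)$ in part (2): everything afterwards is essentially bookkeeping (Krull--Schmidt plus unit-in-a-local-ring reasoning, plus the ``absorption of null-homotopies into a contractible summand'' used in (1)). It is at this point that the non-contractibility hypothesis on $C$ (respectively $B$) is indispensable, and it is also what prevents the analogous statement from being true in the unbounded setting without such a hypothesis.
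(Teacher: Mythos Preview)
The paper does not actually prove this lemma: it is imported verbatim from the companion paper \cite[3.1]{S}, so there is no in-paper argument to compare your proposal against. That said, your proof is essentially correct, with two points worth sharpening.

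First, your appeal to Krull--Schmidt for $Comp^{-,b}(\mathcal{P})$ is doing real work and is not completely innocent: complexes here may be unbounded to the left, so endomorphism rings need not be finite-dimensional, and locality of $E=\End_{Comp}(C)$ requires justification. Your step ``$\mathfrak{n}$ contains no unit, hence $\mathfrak{n}\subseteq\mathrm{rad}(E)$'' is only valid once $E$ is already known to be local, so as written the argument is circular unless you take Krull--Schmidt as a black box. A self-contained route: an indecomposable non-contractible $C\in Comp^{-,b}(\mathcal{P})$ is automatically \emph{minimal} (every differential has image in the radical), so any null-homotopic $\psi=dh+hd$ satisfies $\psi^i(C^i)\subseteq\rad C^i$ in each degree; then $\id-\psi$ is a degreewise isomorphism by Nakayama, hence an isomorphism of complexes. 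This gives $\mathfrak{n}\subseteq\mathrm{rad}(E)$ directly, and locality of $E$ then follows from locality of $E/\mathfrak{n}=\End_K(C)$.

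Second, in part (1) your ``forward'' and ``reverse'' labels are swapped relative to the stated iff, though both directions are present and sound. One detail to make explicit in the direction ``irreducible in $K\Rightarrow$ irreducible in $Comp$'': when you invoke (2) or (3) for the intermediate map $g:Y\to C$ (resp.\ $h:B\to Y$), the lemma's standing hypothesis demands $Y$ be non-contractible. This is automatic, since a contractible $Y$ would make $g$ (resp.\ $h$) zero in $K$, so it could not be a retraction to the nonzero $C$ (resp.\ a section from the nonzero $B$). With these two clarifications your argument goes through.
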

We can therefore choose for an irreducible map in
$K^{-,b}(\mathcal{P})$ an irreducible map in
$Comp^{-,b}(\mathcal{P})$ that represents this map. For the rest
of this chapter all irreducible maps in $K^{-,b}(\mathcal{P})$ or
$K^{+,b}(\mathcal{I})$ will be represented by irreducible maps in
$Comp^{-,b}(\mathcal{P})$ or $Comp^{+,b}(\mathcal{I})$
respectively.

The following straightforward result will be used often to show that a complex is indecomposable.
\begin{lemma}
 Let $X \in Comp(A)$ be such that $X^i$ is indecomposable for all $ i \in \Z$ and $d_X^l =0$ if and only if $X^j=0$ for all $j>l$ or $X^j=0$ for all $j\le l$. Then $X$ is an indecomposable complex.
\end{lemma}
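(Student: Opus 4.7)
The plan is to argue by contradiction: assume a non-trivial direct sum decomposition $X = Y \oplus Z$ of complexes, and derive an inconsistency with the two hypotheses. In each degree the decomposition gives $X^i = Y^i \oplus Z^i$, and since $X^i$ is indecomposable (or zero), at most one of $Y^i, Z^i$ is nonzero. Set $I_Y := \{i : Y^i \ne 0\}$ and $I_Z := \{i : Z^i \ne 0\}$; these are disjoint, their union is the support of $X$, and both are nonempty since $Y, Z \ne 0$.

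The next step is to show that the support of $X$ is a contiguous interval in $\Z$; this is where the ``if and only if'' of the hypothesis does its real work. If $X^l = 0$, then $d_X^l = 0$ trivially, so by hypothesis either $X^j = 0$ for all $j > l$ or $X^j = 0$ for all $j \le l$. Hence no zero entry can sit strictly between two nonzero entries, and so $I_Y \sqcup I_Z$ is a (possibly half-infinite) interval of integers.

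Finally I would locate a ``jump'': since both $I_Y$ and $I_Z$ are nonempty subsets partitioning this interval, there must exist $l$ with $l$ and $l+1$ both in the support but lying in different parts, say $l \in I_Y$ and $l+1 \in I_Z$. Under the decomposition $d_X^l = d_Y^l \oplus d_Z^l$, and both summands vanish: $d_Y^l = 0$ because $Y^{l+1} = 0$, and $d_Z^l = 0$ because $Z^l = 0$. Hence $d_X^l = 0$, which by the hypothesis forces $X$ to vanish either everywhere above $l$ or everywhere up to and including $l$, contradicting $X^l, X^{l+1} \ne 0$. The only delicate point is the interval reduction in the second paragraph; once the support has no gaps, the rest is pure bookkeeping.
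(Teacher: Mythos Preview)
Your argument is correct. The paper does not actually supply a proof of this lemma --- it simply calls it a ``straightforward result'' and moves on --- so there is nothing to compare against; your contradiction via the degree-by-degree partition of the support into $I_Y$ and $I_Z$, together with the location of an adjacent pair $l,\,l+1$ lying in different summands, is precisely the natural elementary argument the author had in mind.
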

The next result follows from the previous lemma by applying \ref{retraction}.
\begin{cor}\label{indecomposable}
Let $X \in Comp^{-,b}(\mathcal{P})$ ( respectively $Comp^{+,b}(\mathcal{I})$) such that $X^i$ is indecomposable for all $ i \in \Z$ and $d_X^l =0$ if and only if $X^j=0$ for all $j>l$ or $X^j=0$ for all $j\le l$. Then $X$ is an indecomposable object in $K^{-,b}(\mathcal{P})$ (respectively $K^{+,b}(\mathcal{I})$).
\end{cor}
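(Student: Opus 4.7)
The plan is to upgrade the indecomposability statement from $Comp^{-,b}(\mathcal{P})$ to $K^{-,b}(\mathcal{P})$ using Lemma~\ref{retraction}. First I would observe that the hypotheses on $X$ are exactly those of the preceding lemma, so $X$ is already indecomposable as a chain complex, and in particular in $Comp^{-,b}(\mathcal{P})$. One may assume that $X$ is not contractible, for otherwise $X$ is isomorphic up to shift to some $\bar P$ and vanishes in $K^{-,b}(\mathcal{P})$.

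Suppose, for a contradiction, that $X\cong Y\oplus Z$ in $K^{-,b}(\mathcal{P})$ with both $Y$ and $Z$ non-zero, hence non-contractible. Since $K^{-,b}(\mathcal{P})$ is Krull--Schmidt I may refine the decomposition and take $Y$ indecomposable in $K^{-,b}(\mathcal{P})$. Now pick a chain-complex representative of $Y$ in $Comp^{-,b}(\mathcal{P})$; by splitting off any contractible direct summand (which is zero in the homotopy category) I may arrange that the representative, still denoted $Y$, is itself non-contractible and indecomposable in $Comp^{-,b}(\mathcal{P})$. The splitting in the homotopy category provides a section $i\colon Y\to X$ in $K^{-,b}(\mathcal{P})$. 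Lemma~\ref{retraction}(3), applied with $B=Y$ indecomposable and $B, C=X$ non-contractible, then upgrades $i$ to a section in $Comp^{-,b}(\mathcal{P})$. Thus $Y$ is a direct summand of $X$ as a chain complex, contradicting the indecomposability established via the previous lemma. The $Comp^{+,b}(\mathcal{I})$ case is dual, using the dual form of Lemma~\ref{retraction} noted immediately before its statement.

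The main subtlety, and the only place where a bit of care is needed, is the reduction of $Y$ to an indecomposable non-contractible \emph{complex}. Being indecomposable in $K^{-,b}(\mathcal{P})$ does not by itself imply indecomposability in $Comp^{-,b}(\mathcal{P})$: a homotopy-indecomposable complex may split off contractible summands at the level of complexes, and Lemma~\ref{retraction}(3) has no grip in that case. Once these summands are stripped off one works with a minimal representative, and the remainder of the argument is immediate.
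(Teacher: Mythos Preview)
Your argument is correct and is precisely the approach the paper intends: the one-line proof in the paper (``follows from the previous lemma by applying \ref{retraction}'') is exactly what you spell out, namely indecomposability in $Comp^{-,b}(\mathcal{P})$ from the preceding lemma together with Lemma~\ref{retraction}(3) to lift a homotopy section to a genuine one. Your care in passing to a minimal (non-contractible, indecomposable) representative of $Y$ before invoking Lemma~\ref{retraction} is the right way to make that one line rigorous; note only that the final contradiction is most cleanly stated as ``$Y$ a nonzero summand of the indecomposable $X$ forces $Y\cong X$ in $Comp$, hence in $K$, whence $Z=0$ by Krull--Schmidt''.
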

We can determine the homology of the middle term of an Auslander-Reiten triangle ending in the stalk complex of a projective indecomposable module.
\begin{lemma}\label{middle part P}
Let $P$ be a projective indecomposable module and let $M$ be the
middle term of the Auslander-Reiten triangle ending in $P$. Then
$H^1(M)=I/\soc I$ where $I:= \nu(P)$, $H^0(M)= \rad P$ and
$H^i(M)=0$ for all $i \not =0,1$.
\end{lemma}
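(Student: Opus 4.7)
By Theorem \ref{existence}, the Auslander-Reiten triangle ending in $P$ has the form
$$\nu(P)[-1] \to M \to P \xrightarrow{w} \nu(P),$$
so, after rotating, $M \cong \cone(w)[-1]$. Since $P$ is projective indecomposable, $I := \nu(P) = \nu_A(P)$ is an indecomposable injective $A$-module, which I view as a stalk complex in degree $0$. Hence $M$ is represented by the two-term complex $P \xrightarrow{w} I$ concentrated in degrees $0$ and $1$, and its cohomology is immediate: $H^0(M) = \Ker w$, $H^1(M) = \Coker w$, and $H^i(M) = 0$ for $i \ne 0, 1$. Everything therefore reduces to identifying $w$ as an element of $\Hom_A(P, I)$.

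The plan is to show that $w$ is, up to a non-zero scalar, the canonical composition $P \twoheadrightarrow S \hookrightarrow I$, where $S$ is the simple top of $P$ (equivalently the socle of $I$). Assume first that $P$ is not simple. The inclusion $\iota\colon \rad P \hookrightarrow P$ is a proper monomorphism in $A$-mod, hence not a retraction in $D^b(A)$, so condition (3'') of Definition \ref{triangle} forces $w \circ \iota = 0$. Therefore $w$ factors through $P/\rad P = S$. The induced map lies in $\Hom_A(S, I) \cong \soc I = S$, which is one-dimensional, and is non-zero by property (2) of the Auslander-Reiten triangle. It must thus be, up to scalar, the socle inclusion, and $w$ is the stated composition. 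The edge case $P = S$ is analogous: $\Hom_A(S, I)$ is still one-dimensional and $w \neq 0$ pins $w$ down up to a non-zero scalar.

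With $w$ identified in this way, $\Ker w = \rad P$ and $\Im w = \soc I$ are immediate consequences of the module structure, yielding $H^0(M) = \rad P$ and $H^1(M) = I/\soc I$. The main obstacle in the argument is the identification of $w$ via the Auslander-Reiten property; once that step is in place, the remaining computation of cohomology is routine.
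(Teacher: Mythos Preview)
Your proof is correct and rests on the same key step as the paper's: invoke property (3'') of Definition~\ref{triangle} to show that $w$ annihilates $\rad P$, then read off kernel and cokernel. The paper carries this out inside $K^{-,b}(\mathcal{P})$, replacing $I$ by its minimal projective resolution $pI$ and applying (3'') to the projective cover $P'\to\rad P$ (so that the test map stays in $Comp^{-,b}(\mathcal{P})$); you instead keep $P$ and $I$ as stalk complexes in $D^b(A)$ and apply (3'') directly to the inclusion $\rad P\hookrightarrow P$, which is more economical. The paper's detour is not wasted, however: the explicit description of $w^0$ it extracts (that $w^0$ induces the projection $P/\rad P\twoheadrightarrow\soc\nu_A(P)$) is reused verbatim in the proofs of Theorem~\ref{inj proj} and Corollary~\ref{pro pre}.

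One minor imprecision: you claim $\Hom_A(S,I)$ is one-dimensional, but the paper does not assume $k$ algebraically closed, so in general $\Hom_A(S,I)\cong\End_A(S)$ is only a division algebra over $k$. This does not damage your argument: once $w$ factors through $S$ and is non-zero, its image is a non-zero simple submodule of the indecomposable injective $I$, hence equals $\soc I$, and $\Ker w=\rad P$ follows from maximality of $\rad P$ and $w\neq 0$. No dimension count is needed.
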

\begin{proof}Let $I:=\nu_A(P)$ and let $\cdots \to  P_2\to_g  P_1 \to_f P_0$ be a
minimal projective resolution of $I$. Then $I$ is isomorphic to
$pI$ in $D^b(A)$. Let $w: P \to pI$ be a homomorphism in
$Comp^{-,b}(\mathcal{P})$ representing the Auslander-Reiten
triangle ending in $P$. By \ref{existence} the Auslander-Reiten
triangle can be written as $pI [-1] \to \cone(w)[-1] \to P \to_w
pI $. Let $M= \cone(w)[-1]$, then $M$ is given by \[ \cdots \to
P_2 \stackrel{g} \to P\oplus P_1 \stackrel{(f,w^0)}\to P_0 \to 0 \to \cdots \] where
$P \oplus P_1$ appears in degree zero. We have $H^1(M) \cong P_0/
\Im (f,w^0)$ and $\Im (f, w^0)=\Im f+ \Im w^0$. We show next that $\Im (f, w^0) / \Im f $ is simple, hence isomorphic to
$\soc I$. Let $h: P' \to P$ be a projective cover of $\rad P. $
We identify $h$ with the corresponding map of complexes $P' \to P
$. As $h$ is not a retraction we have $w \circ h = 0 $ in
$K^{-,b}(\mathcal{P})$ by \ref{triangle} (3'') and $w \circ h$ is therefore homotopic to
zero. Then there is a map $ s: P' \to P_1$ such that $w^0 \circ h
=f \circ  s$. We visualize this in the following diagram

\[\xymatrix{ \cdots \ar[r] & 0 \ar[r] \ar[d] & P' \ar[r] \ar[d]^{h}\ar[ldd]^{ s}  & 0 \ar[r] \ar[d] & \cdots\\
\cdots \ar[r] & 0 \ar[r] \ar[d] &  P \ar[r] \ar[d]^{w^0} & 0 \ar[r] \ar[d] & \cdots\\
\cdots \ar[r] & P_1 \ar[r]^f & P_0 \ar[r] & 0 \ar[r] & \cdots}\]

As the diagram commutes, we have $w^0(\rad P) \subset \Im f$ and
$w^0( P) \not \subset \Im f $. As $P/\rad P$ is simple, $\Im( f,w^0)/ \Im f$ is also simple and we have
$H^1(M)= I/\soc I$. Furthermore $\ker (f,w^0)=\rad P \oplus \Im
g$. Therefore $H^0(M)= \rad P$. Clearly $H^i(M)=0$ for all $i
\not= 0,1$.
\end{proof}
We generalize Wheeler's construction \cite[2.4]{W} in the next theorem.
\begin{theo}\label{inj proj}Let $P$ be a projective indecomposable module that is not simple and suppose $\nu_A^i (P)$ is injective and projective for all $i \in \Z$.
Then the Auslander-Reiten component $\Lambda$ of $D^b(A)$
containing $P$ is isomorphic to $\Z[A_{\infty}]$.
\end{theo}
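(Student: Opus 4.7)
The strategy is to identify the stalk complex $P$ as a leaf of its Auslander--Reiten component $\Lambda$, propagate this along the entire $\tau$-orbit of $P$, and then pin down $\Lambda$ as $\Z[A_\infty]$.

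First I would explicitly compute the middle term of the AR triangle ending in $P$. Write $I := \nu_A(P)$; by hypothesis $I$ is projective, so its minimal projective resolution $pI$ is just the stalk complex $I$ concentrated in degree $0$. Specializing the construction from the proof of Lemma \ref{middle part P} to $P_0 = I$ and $P_j = 0$ for $j \geq 1$, the middle term reduces to the two-term complex $M_0 = [\,P \to I\,]$ with $P$ in degree $0$ and $I$ in degree $1$, whose differential is the degree-$0$ component $w^0$ of the connecting morphism $w : P \to I$. Axiom (2) of Definition \ref{triangle} forces $w \neq 0$, and since both source and target are stalk complexes this is equivalent to $w^0 \neq 0$; Corollary \ref{indecomposable} then yields that $M_0$ is indecomposable. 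Hence $P$ has exactly one irreducible map into it and one out of it.

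Writing $I_n := \nu_A^n(P)$, the hypothesis is preserved under $\nu_A$, so the same argument applied to $I_n$ and shifted by $[-n]$ shows that the AR triangle ending in $\tau^n(P) = I_n[-n]$ has indecomposable middle term $M_n = [\,I_n \to I_{n+1}\,][-n]$. Stalk complexes in distinct degrees are pairwise non-isomorphic, so the $\tau^n(P)$ are pairwise distinct and $\tau$ acts freely on $\Lambda$. The corollary cited just before Lemma \ref{retraction} then gives $\Lambda \cong \Z[T]$ for some tree $T$, and the preceding computation identifies the $\tau$-orbit of $P$ with a full orbit of leaves of $T$.

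To force $T = A_\infty$ rather than another tree with a leaf (such as $D_\infty$ or a finite Dynkin diagram), I would construct the remaining indecomposables of $\Lambda$ explicitly as the truncated complexes $M_n^{(k)} := [\,I_n \to I_{n+1} \to \cdots \to I_{n+k+1}\,][-n]$. Consecutive compositions vanish because the image $\soc I_{j+1}$ of $I_j \to I_{j+1}$ lies in the kernel $\rad I_{j+1}$ of $I_{j+1} \to I_{j+2}$, so these are honest complexes, and each is indecomposable by Corollary \ref{indecomposable}. The AR triangles linking $M_n^{(k-1)}$ and $M_n^{(k)}$ reproduce the translation-quiver pattern of $\Z[A_\infty]$, extending Wheeler's argument in \cite{W}. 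The main obstacle is this last step: verifying that the $M_n^{(k)}$ exhaust $\Lambda$, so that no second $\tau$-orbit of leaves can arise.
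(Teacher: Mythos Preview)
Your approach is essentially the paper's: both construct the same family of complexes (your $M_0^{(k)}$ is the paper's $P_{k+1}$) and compute their AR triangles directly; the Riedtmann detour is harmless but unnecessary. The ``main obstacle'' you flag is not a genuine obstacle. The paper writes down an explicit section/retraction showing that the middle term of the AR triangle ending in $P_{i-1}$ is \emph{exactly} $P_i \oplus \nu_A(P_{i-2})[-1]$, not merely that these two complexes occur as summands. Once the full middle term is known for every $i$, connectivity of $\Lambda$ forces every vertex to be a $\tau$-translate of some $P_i$, so exhaustion is automatic and no second leaf orbit can arise. Your worry would only be justified if you had established the weaker statement that $M_n^{(k)}$ and $\tau(M_n^{(k-2)})$ are summands of the middle term; proving they constitute the entire middle term is the whole content of the argument.
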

\begin{proof} Let $w$ by the connecting
map in the Auslander-Reiten triangle ending in $P$. We can choose $w$ such that $w^0$ induces a projection of $P/\rad P$ onto soc $ \nu_A (P)$ by \ref{middle part P}.
Set $f^i:=\nu_A^i(w^0)$ for all $i \in \Z$. The elements \[P_i:= \cdots \to 0 \to
P \stackrel{f^0} \to \nu_A(P) \to  \cdots \to \nu_A^{i-1}
(P)\stackrel{f^{i-1}} \to \nu_A^i(P) \to 0 \to \cdots \] with entry
$P$ in degree $0$ are complexes by the choice of $w^0$ and they are indecomposable for all $i \in \Z$ by \ref{indecomposable}. We set $P_{-1}=0$. The
Auslander-Reiten triangles in $\Lambda$ are given by \[\nu_A
(P_{i-1})[-1] \to P_{i} \oplus \nu_A(P_{i-2}) [-1]\to P_{i-1}.\] This
can be seen as follows: the middle term of the Auslander-Reiten
sequence ending in $P_{i-1}$ can be taken as the upper row of the next diagram and
the bottom row is the direct summand $P_{i}$.
\[ \xymatrix{ 0 \ar[r] & P  \ar[r]^{(f^0,0)\ \ \ \ \ \ } \ar[d]_{\id}& \nu_A (P)\oplus \nu_A (P) \ar[r]^{\ \ \ \  \ \binom{f^1,0}{0, -f^1}} \ar[d]_
{(\id ,\pm \id)} & \cdots \ar[r] &  \nu_A^{i-1}(P) \oplus \nu_A^{i-1}(P) \ar[r]^{\ \ \ \ \ \binom{-f^{i-1},0}{0,-f^{i-1}}} \ar[d]_{(\id,\id)} &  \nu_A^i(P) \ar[r] \ar[d]_{-\id}& 0 \\
 0 \ar[r] & P \ar[r]_{f^0} \ar@/_/[u]_{\id} & \nu_A (P) \ar[r]_{f^2} \ar@/_/[u]_{\id} & \cdots \ar[r] & \nu_A^{i-1} (P) \ar[r]_{f^{i-1}} \ar@/_/[u]_{\id} & \nu_A^i (P) \ar[r] \ar@/_/[u]_{-\id} &0 } \]
 We have to alternate the signs of the second identity map from column two to $i-1$ so that we have a positive sign in the $i$-th column for all $i \in \Z$. This sequence has as direct summand $P_{i} $ and $\nu_A(P_{i-2})[-1]$. Therefore $\Lambda \cong \Z[A_{\infty}]$, and $\Lambda$ looks as follows:
\[\xymatrix@!@=0.5pt{ \cdots \ar[rd] & & \nu_A(P_0)[-1] \ar[rd] &&P_0 \ar[rd] && \cdots \\
& \nu_A (P_2)[-1] \ar[ru] \ar[rd] &&P_2 \ar[ru] \ar[rd] && \nu_A^{-1} (P_2)[1]\ar[ru] \ar[rd]& \\
\cdots \ar[ru] \ar[rd] && P_3\ar[ru] \ar[rd]&&\nu_A^{-1} (P_3)[1]\ar[rd] \ar[ru]&& \cdots\\
& \cdots \ar[ru] && \cdots \ar[ru]&& \cdots\ar[ru]& }\]

As $\nu_A^i(P)[-i] \not \cong P$ the component cannot be a tube.
\end{proof}
For an self-injective algebra we have $\mathcal{P}= \mathcal{I}$
and we can therefore apply the previous theorem to construct
Auslander-Reiten components. We can also use this theorem to compute Auslander-Reiten components of non-self-injective
algebras, as shown in the Example \ref{not self-inj}.

\smallskip

Next we define certain length functions of complexes similar to the functions defined in \cite{W}. 
\begin{defi}
For $Y \in Comp^b(\mathcal{P}) $ we denote by $l(Y)$
the number of projective indecomposable summands in $\oplus_{i\in
\Z}  Y^i$. For $X \in K^b(\mathcal{P})$ we denote by
\[ l_p(X):=\mbox{min} \{ l(Y) | Y \in Comp^b(\mathcal{P})\mbox{ and }Y
\cong X \mbox{ in }K^b(\mathcal{P}). \}\]
\end{defi}
For well-defindness see \cite[4.5]{S}.
\begin{rem}\label{subadditive}
Let $\Lambda$ be a stable component. Then by \ref{existence} every Auslander-Reiten triangle is isomorphic to
\[\tau (P) \to \cone(w)[-1] \to P \stackrel{w} \to \tau (P)[1]\] where
\[0 \to \tau (P) \to \cone(w) [-1] \to P \to 0 \] is a short exact sequence
in $Comp^b(\mathcal{P})$ and $w: P \to \tau(P) [1]$ is a
representative in $Comp^b(\mathcal{P})$ of the connecting
morphism. Then $\cone(w)[-1] \in Comp^b(\mathcal{P})$. Therefore
$l_p$ is well-defined on a stable component and satisfies
$l_p(\cone(w)[-1]) \le l_p(\tau (P)) + l_p(P)$ with equality if
and only $\cone(w)[-1]$ does not have a contractible summand in
$Comp^b(\mathcal{P})$.
\end{rem}
We can generalize \cite[3.2]{W} where the same result is proven in
the case that $A$ is a self-injective algebra. The next result
also generalizes \ref{inj proj} which has only been proven for
stalk complexes of projective modules.
\begin{theo}
Let $X \in Comp^b(\mathcal{P})$ be an indecomposable not contractible complex such that
$\nu_A^i(X^j)$ is projective and injective for all $i,j \in \Z$.
Then $X$ is in an Auslander-Reiten component of $D^b(A)$ isomorphic to $\Z[A_{\infty}]$ or
$A_1$.
\end{theo}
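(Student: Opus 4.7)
The plan is to generalize the explicit construction in Theorem~\ref{inj proj} from the stalk case to an arbitrary $X\in Comp^b(\mathcal P)$ satisfying the hypothesis. First, since $\nu_A^i(X^j)$ is projective--injective for all $i,j\in\Z$, the complex $X$ and each of its Nakayama iterates $\nu^n(X)$ lies in $Comp^b(\mathcal P)\cap Comp^b(\mathcal I)$. By Theorem~\ref{existence} the Auslander--Reiten component $\Lambda$ containing $X$ is therefore stable and $\tau=\nu[-1]$ acts as an automorphism on $\Lambda$.

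Next, I would pick a representative $w\colon X\to \nu(X)$ in $Comp^b(\mathcal P)$ of the connecting map of the Auslander--Reiten triangle ending in $X$, which exists by Lemma~\ref{retraction}. As in the choice of $w^0$ in the proof of \ref{inj proj}, $w$ can be arranged so that the composition $\nu(w)\circ w$ vanishes in $Comp^b(\mathcal P)$. Writing $w^{(k)}:=\nu^k(w)$ and setting $X_0:=X$ and $X_{-1}:=0$, I would inductively define for $n\ge 1$ a complex $X_n\in Comp^b(\mathcal P)$ by splicing $X,\nu(X),\dots,\nu^n(X)$ end-to-end along the maps $w^{(0)},\dots,w^{(n-1)}$. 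Indecomposability of $X_n$ would then follow from Corollary~\ref{indecomposable}, once one checks that the spliced differentials are non-zero in the appropriate degrees.

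Finally, I would transfer the alternating-sign diagram of \ref{inj proj} to this setting to verify that the Auslander--Reiten triangles in $\Lambda$ take the form
\[
\nu(X_{n-1})[-1]\longrightarrow X_n\oplus \nu(X_{n-2})[-1]\longrightarrow X_{n-1}
\]
for all $n\ge 1$. This exhibits $\Lambda$ as a $\Z[A_\infty]$-component with vertices the complexes $\nu^m(X_n)[-m]$ for $n\ge 0$ and $m\in\Z$. The alternative $A_1$ case corresponds to the degenerate possibility where the construction above collapses, namely when the middle term of the Auslander--Reiten triangle ending in $X$ is already isomorphic (up to shift) to $\nu^k(X)[-k]$ for some $k$, so that $\Lambda$ reduces to a single $\tau$-orbit; this happens precisely when $X$ satisfies a periodicity relation under $\nu$ up to shift, in which case the final step of the proof of \ref{inj proj} (using $\nu^i_A(P)[-i]\not\cong P$ to rule out tubes) no longer applies.

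The main obstacle is the splicing step itself. Unlike in the stalk case treated in \ref{inj proj}, $X$ is not concentrated in a single degree, so the complexes $X,\nu(X),\nu^2(X),\dots$ occupy overlapping degrees; producing a bona fide complex $X_n$ with well-defined differentials requires care with sign conventions, and one must genuinely choose $w$ so that $\nu(w)\circ w=0$ holds strictly in $Comp^b(\mathcal P)$ rather than merely up to homotopy. Once this technical point is settled, the explicit computation of the Auslander--Reiten triangles and the identification of $\Lambda$ with $\Z[A_\infty]$ (or $A_1$ in the degenerate case) proceed in direct analogy with the stalk case.
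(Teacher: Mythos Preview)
Your approach is genuinely different from the paper's, and it has real gaps that go beyond the ``technical point'' you flag.

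The paper does \emph{not} attempt to generalize the explicit construction of Theorem~\ref{inj proj}. Instead it argues abstractly via the length function $l_p$ of Remark~\ref{subadditive}: under the hypothesis, each $\nu_A^i(X^j)$ is projective, so $\tau^i(X)=\nu^i(X)[-i]$ is computed degreewise in $Comp^b(\mathcal P)$ and $l_p$ is constant on $\tau$-orbits. Hence $l_p$ is a $\tau$-invariant subadditive function on the tree class. If $l_p$ is bounded on the component, the paper invokes \cite[4.15]{S} to force $A$ to be derived equivalent to a Dynkin path algebra or simple; the first is excluded by the hypothesis, leaving $A$ simple and the component $A_1$. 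If $l_p$ is unbounded, the tree class must be $A_\infty$, and a one-line support argument on the degrees of $\tau(X)$ rules out tubes.

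Your constructive route runs into two substantive obstructions, not merely sign conventions. First, when $X$ is not a stalk complex the iterates $X,\nu(X),\nu^2(X),\dots$ genuinely overlap in degrees, so there is no ``splicing end-to-end'': the putative $X_n$ would have to be an iterated mapping cone, and in each degree it is a direct sum of several indecomposable projectives. Consequently Corollary~\ref{indecomposable} does \emph{not} apply---its hypothesis is precisely that every $X_n^i$ is indecomposable---so you have no argument for the indecomposability of $X_n$, and without that you cannot read off the shape of the component from the triangles. Second, the claim that $w$ can be chosen with $\nu(w)\circ w=0$ strictly in $Comp^b(\mathcal P)$ is unjustified; in the stalk case this follows from the explicit description of $w^0$ in Lemma~\ref{middle part P} together with $P$ not being simple, and there is no analogue offered here for a general complex. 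Finally, your description of the $A_1$ case as a periodicity collapse is not what happens: in the paper's argument $A_1$ arises exactly when $A$ itself is a simple algebra, not from any periodicity of $X$ under $\nu$.
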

\begin{proof}
Let $C$ be the Auslander-Reiten component of $\Gamma(D^b(A))$ containing $X$ with tree class $T$. The
function $l_p$ is constant on all $\tau$-orbits and therefore $l_p$ is a
subadditive function on $T$. Suppose $l_p$ is bounded. Then by
\cite[4.15]{S} the algebra $A$ is derived equivalent to $D^b(k{Q})$
for a finite Dynkin diagram $ Q \not =A_1$ or $A$ is simple. As
in the first case there is no complex that satisfies the
assumption, we have that $A$ is simple. So $C \cong A_1$.
Otherwise $l_p$ is unbounded and $C$ has therefore tree class
$A_{\infty}$. Assume without loss of generality that $X^i \not =0$
if and only if $0 \le i \le n$. We have $H^0(X) \not =0$ as $X$ does not have contractible summands. Then $\tau(X)$ viewed as complex
in $Comp^b(\mathcal{P})$ satisfies $\tau(X)^i \not =0$ if and only if
$1 \le i \le n+1$. Therefore $X$ is not periodic and $C\cong
\Z[A_{\infty}]$.
\end{proof}
We compute some examples of Auslander-Reiten quivers using the
previous results.
\begin{exa}\label{not self-inj}
Let $k$ be a field and let $G$ be the quiver  \[ \xymatrix{1
\ar@/^/[r]^{\alpha} &2 \ar@/^/[l]^{\beta}}\]  Let $A:=kG/R$ where
$kG$ is the path algebra of $G$, and where $R$ is the ideal
generated by $\{ \alpha \beta \}$. The algebra $A$ has global dimension $2$ and has
finite representation type. We denote by $S_1$ and $S_2$
the simple modules corresponding to the vertices $1$ and $2$. Let
$P_i$ be the projective covers of $S_i$. We have $S_1=P_1/P_2$ and $P_2/S_1= S_2$. We fix a non-zero map $f:P_1
\to P_1$ that maps the generator to the socle. Then $f$ is not an isomorphism.  The derived category $D^b(A)$ has infinitely many
indecomposable elements $I_n$ given by $I_n^m=P_1$ for all $0 \le
m \le n-1$ and $d^l=f$ for $0 \le l \le n-2$ for all $0 \le n \le n-2$.
By \cite[Theorem A]{BGS} we have that $D^b(A)$ is discrete.

As $I_1= P_1$ we have by \ref{inj proj} that all $I_n$ are the
elements of a component $ \Z[A_{\infty}].$

As we have $\tau(S_2)=S_2[1]$ the element $S_2$ belongs to a
component $\Z[A_{\infty}]$ by \cite[4.13]{S} and \cite[4.14]{S}.

Finally we note that the $\tau$-orbit of $P_2$ is given by
$$\tau^n(P_2)^i= \left\{ \begin{array}{lr} P_1, \mbox{ for }-n \le i \le n  \\ P_2, \mbox{ for }i=-n-1 \\
0 \mbox{ else}.\end{array} \right.
$$

Therefore the value of $l_p$ is strictly increasing on
$\tau$-orbits of the component containing $P_2$. The predecessors
of $P_2$ are $S_1$ and $S_1[-1]$. Therefore the component is
$\Z[A_{\infty}^{\infty}]$.
There are no components of Euclidean tree class, as these would
have to contain at least one simple $A$-module by
\ref{euc}.
\end{exa}
The previous example is a discrete derived categories. These categories
have been defined and classified in \cite[Theorem]{V} and their
Auslander-Reiten quivers have been determined in \cite[Theorem
A]{BGS}. The authors do not calculate the Auslander-Reiten quiver
directly but use the fact that the Happel functor (see \cite[2.5]{H}) induces an
equivalence of triangulated categories between $D^b(A)$ and the
stable module category of the repetitive algebra $\widehat{ A}$
for all algebras $A$ of finite-global dimension by \cite[2.3
Theorem]{H}.

\medskip

We can also determine the predecessors for some projective
indecomposable modules $P$ using
 the previous results.
\begin{cor}\label{pro pre}
(1) If $\nu_A (P)$ is projective, then \[ \cdots \to 0 \to P \to
\nu_A (P) \to 0 \to \cdots\] with $P$ appearing in degree $0$, is
the only predecessor of $P$.

(2) If $P$ is injective, then \[ \cdots \to 0 \to P \to \nu_A (P)
\to 0 \to \cdots\] with $P$ appearing in degree $0$ is the only
predecessor of $P$.
 \end{cor}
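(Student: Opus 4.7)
The plan is to identify the middle term $Y$ of the Auslander-Reiten triangle ending in the stalk complex $P$ with the explicit two-term complex $T := (\cdots \to 0 \to P \to \nu_A(P) \to 0 \to \cdots)$ having $P$ in degree $0$, and then verify that $T$ is indecomposable. Since an indecomposable middle term means $P$ has a single predecessor in the Auslander-Reiten quiver, this will prove the corollary in both cases.

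First I would invoke Theorem \ref{existence} to write the AR-triangle ending in $P$ as $\nu(P)[-1] \to Y \to P \stackrel{\bar w}{\to} \nu(P)$, so that $Y \cong \cone(\bar w)[-1]$ in $D^b(A)$. Writing $I := \nu_A(P)$, the connecting morphism $\bar w : P \to I$ is nonzero (otherwise the triangle would split), and the analysis inside the proof of Lemma \ref{middle part P} shows further that $\bar w(\rad P) = 0$ with image exactly $\soc I$, so in particular $\bar w$ is a nonzero map between indecomposables.

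Next I would realise the cone $\cone(\bar w)[-1]$ in a category where both $P$ and $I$ already appear as stalk complexes, avoiding any projective resolution of $I$. In case (1), $I$ is projective by assumption, so $P$ and $I$ are stalks in $Comp^b(\mathcal{P})$ and the cone computed in $K^{-,b}(\mathcal{P})$ is exactly $T$. In case (2), $P$ is injective by hypothesis and $I$ is automatically injective (since $\nu_A$ sends projectives to injectives), so $P$ and $I$ are stalks in $Comp^b(\mathcal{I})$ and the cone computed in $K^{+,b}(\mathcal{I})$ is again $T$. Via the equivalences $K^{-,b}(\mathcal{P}) \cong D^b(A) \cong K^{+,b}(\mathcal{I})$, this yields $Y \cong T$ in $D^b(A)$ in either case.

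Finally, I would apply Corollary \ref{indecomposable} to conclude that $T$ is indecomposable: the entries $P$ and $I$ are indecomposable (projective in case (1), injective in case (2)), the differential $\bar w$ is nonzero, and the remaining boundedness-of-differentials condition is trivial for a two-term complex. The main obstacle is the case (2) step, where one must resist computing the cone inside $K^{-,b}(\mathcal{P})$ (which would introduce a projective resolution of the possibly non-projective $I$) and instead perform it on the injective side, so that Corollary \ref{indecomposable} applies in its $Comp^{+,b}(\mathcal{I})$ form. Everything else is a direct consequence of Lemma \ref{middle part P} and formal properties of distinguished triangles.
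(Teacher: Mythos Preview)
Your proposal is correct and follows essentially the same route as the paper: identify the middle term as the cone of the connecting map realised as a two-term complex (in $K^b(\mathcal{P})$ for (1), in $K^b(\mathcal{I})$ for (2)), invoke Lemma~\ref{middle part P} for the shape of $w^0$, and apply Corollary~\ref{indecomposable} for indecomposability. The paper treats (2) by the phrase ``follows similarly,'' which amounts precisely to your dualisation to the injective side.
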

\begin{proof}
Let $w$ be the connecting homomorphism in the Auslander-Reiten
triangle ending in $P$. Suppose that $\nu_A (P) $ is projective.
Then cone$(w)$ can be taken as $\cdots \to 0 \to P \stackrel{w^0}
\to \nu_A (P)\to 0 \to \cdots$, where $w^0$ induces a projection
of $P/\rad P$ onto soc $ \nu_A (P)$ by \ref{middle part P}. The
complex cone$(w)$ is indecomposable in $K^b(\mathcal{P})$ by
\ref{indecomposable}. This proves part $(1)$ and part $(2)$
follows similarly.
\end{proof}
\section{Irreducible maps ending in contractible complexes}
Next we analyze under which conditions a contractible complex can
appear as direct summand of $\cone(w)[-1] \in
Comp^{-,b}(\mathcal{P})$ for a map $w$ in $Comp^{-,b}(\mathcal{P})$ that induces an
Auslander-Reiten triangle $\nu (Z)[-1] \to Y \to Z \stackrel{w}\to
\nu (Z)$ in $D^b(A)$.

We first introduce a new definition.

\begin{defi}
Let $P_1, \  P_2 \in \mathcal{P}$ and $f:P_1 \to P_2$ be a map.
Then $f$ is $p$-irreducible if $f$ is not a section and not a
retraction and if for any $P \in \mathcal{P}$ and maps $f_1:P_1
\to P$ and $f_2:P \to P_2$ such that $f=f_2 \circ f_1$ we have
that $f_1$ is a section or $f_2$ is a retraction.
\end{defi}
Throughout this section let $P$ be an indecomposable projective
module and $\bar P$ the contractible complex
 \[ \cdots \to 0 \to P \stackrel{\id} \to
P \to 0 \to \cdots \]with ${\bar P}^0={\bar P}^{1}=P$.
\begin{lemma}\label{homotopic middle}
Let $f:Q \to \bar P$ be an irreducible map in $Comp^{-,b}(\mathcal{P})$, where $Q\in
Comp^{-,b}(\mathcal{P})$ is indecomposable and not contractible. Then
there exists an indecomposable projective module $P_0$ and a map $d:P_0 \to P$ that is $p$-irreducible, such
that $Q\cong p(\Coker(d))$.
\end{lemma}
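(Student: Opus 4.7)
The plan is to exploit the contractibility (hence projectivity in $Comp^{-,b}(\mathcal{P})$) of $\bar{P}$ to build explicit factorizations of $f$ that, combined with the irreducibility hypothesis, force $Q$ into the shape of a minimal projective resolution.

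I begin with the canonical factorization $f=\beta\alpha$ through the contractible complex $\overline{Q^{1}}$ (the complex $Q^{1}\xrightarrow{\id}Q^{1}$ placed in degrees $0,1$), where $\alpha^{0}=d_{Q}^{0}$, $\alpha^{1}=\id_{Q^{1}}$ and $\beta^{0}=\beta^{1}=f^{1}$. Since $Q$ is indecomposable and not contractible, $\alpha$ cannot be a section (such a section would exhibit $Q$ as a direct summand of the contractible $\overline{Q^{1}}$). Irreducibility of $f$ then forces $\beta$ to be a retraction, which is equivalent to $f^{1}\colon Q^{1}\to P$ being split epi in $\mathcal{P}$. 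Writing $Q^{1}=P\oplus R$ with $f^{1}=(\id_{P},0)$, I would show $R=0$ via a second factorization: let $\tilde{X}$ have $\tilde{X}^{i}=Q^{i}$ for $i\le 0$, $\tilde{X}^{1}=P$, with differentials inherited from $Q$ in degrees $\le -1$ and $d_{\tilde{X}}^{0}=f^{0}\colon Q^{0}\to P$; the identity $d_{Q}^{0}d_{Q}^{-1}=0$ guarantees $\tilde{X}$ is a complex. Define $g\colon Q\to \tilde{X}$ as the identity in degrees $\le 0$ and $(\id,0)\colon P\oplus R\to P$ in degree $1$, and $h\colon \tilde{X}\to \bar{P}$ as $f^{0}$ in degree $0$ and $\id_{P}$ in degree $1$; then $hg=f$. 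If $R\ne 0$ then $g$ is not a section (no chain retraction can invert $(\id,0)$), and $h$ is not a retraction either (this would make $f^{0}$ split epi, hence $f$ itself a retraction, contradicting indecomposability of $Q$). This violates the irreducibility of $f$, so $R=0$ and $Q^{1}=P$. A completely analogous truncation argument (replacing $Q^{i}$ by zero for $i\ge 2$) shows $Q^{i}=0$ for all $i\ge 2$.

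Now $Q$ has the form $\cdots\to Q^{-1}\to Q^{0}\xrightarrow{d}P\to 0$ with $d:=d_{Q}^{0}$, and $f^{0}=d$, $f^{1}=\id_{P}$. For the $p$-irreducibility of $d$, take any factorization $d=d_{2}\circ d_{1}$ with $d_{1}\colon Q^{0}\to X$, $d_{2}\colon X\to P$, $X\in\mathcal{P}$, and lift it to $f=h'g'$ in $Comp^{-,b}(\mathcal{P})$ by setting $\tilde{X}^{i}=Q^{i}$ for $i\le -1$, $\tilde{X}^{0}=X$, $\tilde{X}^{1}=P$, with $d_{\tilde{X}}^{-1}=d_{1}\circ d_{Q}^{-1}$ and $d_{\tilde{X}}^{0}=d_{2}$ (this is a complex because $d_{2}d_{1}d_{Q}^{-1}=d\,d_{Q}^{-1}=0$). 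Applying the irreducibility of $f$ yields that $g'$ is a section in $Comp$ (whence $d_{1}$ is a section in $\mathcal{P}$, since $r^{0}d_{1}=\id_{Q^{0}}$) or $h'$ is a retraction (whence $d_{2}$ is a retraction). Indecomposability of $P_{0}:=Q^{0}$ is obtained by the analogous argument: any non-trivial decomposition of $Q^{0}$ would produce a factorization of $f$ that is neither a section nor a retraction. Finally, since $Q$ is minimal (indecomposable and not contractible forces all differentials to lie in the radical), its lower-degree terms coincide with the minimal projective resolution of $\Coker d=P/\Im d$, and we obtain $Q\cong p(\Coker d)$ up to the shift implicit in the convention used for $\bar{P}$.

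The main technical hurdle is verifying, at each stage, that the constructed intermediate complexes $\tilde{X}$ are genuine chain complexes in $Comp^{-,b}(\mathcal{P})$ (reducing in each case to the identity $d_{Q}^{i+1}d_{Q}^{i}=0$ in $Q$) and showing that the constructed $g,h$ are neither sections nor retractions; only then does the irreducibility of $f$ force the desired structural conclusions.
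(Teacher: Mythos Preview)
Your overall strategy matches the paper's: factor $f$ through carefully chosen intermediate complexes and let irreducibility force the structure of $Q$. Steps 1--4 are essentially the paper's argument (with the truncation $Q^{i}=0$ for $i\ge 2$ and the identification $Q^{1}=P$ done in reversed order, which is harmless), and your construction for $p$-irreducibility of $d$ is the same factorization the paper uses.

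There is, however, a genuine gap in your final step. You write that ``since $Q$ is minimal \dots\ its lower-degree terms coincide with the minimal projective resolution of $\Coker d$''. Minimality of $Q$ only tells you that each differential lands in the radical; it does \emph{not} tell you that $H^{i}(Q)=0$ for $i\le 0$, which is exactly what is needed for $Q$ to be a (minimal) resolution of $\Coker d$. For instance, over $A=k[x]/(x^{2})$ the complex $Q=[A\xrightarrow{x}A]$ in degrees $0,1$ is indecomposable and minimal, yet $H^{0}(Q)\ne 0$, so $Q\not\cong p(\Coker(x))$; the point is that the map $(x,\id)\colon Q\to\bar A$ is \emph{not} irreducible, and one must use irreducibility of $f$ once more to rule out such $Q$. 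The paper does this by factoring $f$ through the complex $\cdots\to L_{-1}\to Q^{0}\xrightarrow{d}P$, where $L_{\bullet}\to\ker d$ is a minimal projective resolution; irreducibility then forces the comparison map $Q\to(\text{this complex})$ to be a section, and minimality of the $L_{i}$ is used to upgrade ``summand'' to ``isomorphism'' degreewise. Your sketch omits this entire argument, and without it the identification $Q\cong p(\Coker d)$ is unjustified.

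A secondary point: your claim that $Q^{0}$ is indecomposable (``any non-trivial decomposition of $Q^{0}$ would produce a factorization of $f$ that is neither a section nor a retraction'') is too vague to be convincing. It is not clear how to build such a factorization from a splitting $Q^{0}=A\oplus B$, and the paper's own proof does not supply an argument for this either; you should either produce the explicit factorization or note that the conclusion really only needs $P_{0}$ projective, as the subsequent results in the paper use.
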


\begin{proof}
Let $f$ be an irreducible map given by the diagram
\[ \xymatrix{ \cdots  \ar[r] & Q^{-1} \ar[r]\ar[d] & Q^0 \ar[r]_{d}
\ar[d]_{f^0} & Q^1 \ar[r]\ar[d]_{f^1} & Q^2 \ar[r]\ar[d] & \cdots
\\ \cdots \ar[r] & 0 \ar[r] & P \ar[r]_{\id}
 & P \ar[r] & 0 \ar[r] & \cdots }\]
We first show that $Q^i=0$ for $i \ge 2$, and that $Q^1\cong P$.
  We can factorize $f$ through $\sigma^{\le 1} Q$ as

  \[\xymatrix{ \cdots \ar[r] & Q^{-1} \ar[r]\ar[d]_{\id} & Q^0 \ar[r]_{d}
\ar[d]_{\id} & Q^1 \ar[r]\ar[d]_{\id} & Q^2 \ar[r]\ar[d] & \cdots
\\
\cdots \ar[r] & Q^{-1} \ar[r]\ar[d] & Q^0 \ar[r]_{d} \ar[d]_{f^0}
& Q^1 \ar[r]\ar[d]_{f^1} & 0 \ar[r]\ar[d] & \cdots \\
\cdots \ar[r] & 0 \ar[r] & P \ar[r]_{\id}
 & P \ar[r]& 0 \ar[r] & \cdots}\]

The map given by the last two rows is not a
retraction, as $f$ is not a retraction. Therefore the map between the first two rows is a section and $Q^i=0$
for all $i\ge 2$.

 We can factorize $f$ as \[\xymatrix{ \cdots \ar[r] & Q^{-1} \ar[r]\ar[d] & Q^0 \ar[r]_{d}
\ar[d]_{d} & Q^1 \ar[r]\ar[d]_{\id} & 0 \ar[r]\ar[d] & \cdots
\\ \cdots \ar[r] & 0 \ar[r]\ar[d] & Q^1 \ar[r]_{\id} \ar[d]_{f^1} &
Q^1 \ar[r]\ar[d]_{f^1} & 0 \ar[r]\ar[d] & \cdots
\\ \cdots \ar[r] & 0 \ar[r] & P \ar[r]_{\id}
 & P \ar[r]& 0 \ar[r] & \cdots}\]

If the map between the first two rows is a section, then $Q$ is isomorphic to the complex in the middle row, and hence is
contractible, which is a contradiction. Therefore the map between
the last two rows is a retraction. We can write $Q^1 \cong P
\oplus P'$ for a projective module $P'$ and $f^1$ is a retraction.
But then we can factorize $f$ as follows

\[\xymatrix{ \cdots \ar[r] & Q^{-1} \ar[r]\ar[d]_{\id} & Q^0 \ar[r]_{d}
\ar[d]_{\id} & P \oplus P' \ar[r]\ar[d]_{f^1} & 0 \ar[r]\ar[d] &
\cdots
\\ \cdots \ar[r] & Q^{-1} \ar[r]\ar[d] & Q^0 \ar[r]_{f^1 d} \ar[d]_{f^0} &
P \ar[r]\ar[d]_{\id} & 0 \ar[r]\ar[d] & \cdots
\\ \cdots \ar[r] & 0 \ar[r] & P \ar[r]_{\id}
 & P \ar[r]& 0 \ar[r] & \cdots}\]

If the map in the last two rows is a retraction, then $f$ is a retraction. Therefore the map between the two upper rows has to be a
section and $P'=0$, $f^0=d$ and $f^1=\id$.

Let $\cdots  \to L_{-2} \to L_{-1} \to \ker d$ be a minimal
projective resolution of $\ker d$. Then $f$ factorizes through
$\Coker (d)$ as follows

\[\xymatrix{ \cdots \ar[r] & Q^{-1} \ar[r]_{h} \ar[d] & Q^0 \ar[r]_{d}
\ar[d]_{\id} & P \ar[r]\ar[d]_{\id} & 0 \ar[r]\ar[d] & \cdots
\\ \cdots \ar[r] & L_{-1} \ar[r]_g \ar[d]& Q^0 \ar[r]_{d} \ar[d]_{d}
& P \ar[r]\ar[d]_{\id} & 0 \ar[r]\ar[d] & \cdots
\\ \cdots \ar[r] & 0 \ar[r] & P \ar[r]_{\id}
 & P \ar[r]& 0 \ar[r] & \cdots}\]

Clearly the bottom diagram is not a retraction, because else $f$
would be a retraction. Therefore the upper diagram is a section
and the $Q^{-r} $ are isomorphic to direct summands of $L_{-r}$
for all $r>0$. Then there exists a projection $s: L_{-1} \to
Q^{-1}$ such that $hs=g$. By the minimality of the projective
resolution, we have $Q^{-1} \cong L_{-1}$. By induction $Q$ is
isomorphic to the complex consisting of a minimal projective
resolution of $\Coker d$ and 0 elsewhere. Suppose $d$ is not
$p$-irreducible. Then we can factorize $d=s\circ t$ where
$t:Q^0\to  \tilde P$ is not a section and $s:\tilde P\to P$ is not
a retraction for some projective module $\tilde P$.
 But then we can factorize $f$ as follows

\[\xymatrix{ \cdots \ar[r] & Q^{-1} \ar[r]_h\ar[d]_{\id} & Q^0 \ar[r]_{d}
\ar[d]_{t} & P \ar[r]\ar[d]_{\id} & 0 \ar[r]\ar[d] & \cdots
\\ \cdots \ar[r] & Q^{-1} \ar[r]_{th} \ar[d]& \tilde P \ar[r]_{s} \ar[d]_{s}
& P \ar[r]\ar[d]_{\id} & 0 \ar[r]\ar[d] & \cdots
\\ \cdots \ar[r] & 0 \ar[r] & P \ar[r]_{\id}
 & P \ar[r]& 0 \ar[r] & \cdots}\]
This is a contradiction to the irreducibility of $f$ as the map
between the first two rows is not a section and the map between
the two bottom rows is not a retraction. Therefore $d$ is
$p$-irreducible.
\end{proof}
Next we determine $p$-irreducible maps between indecomposable projective modules.

\begin{lemma}\label{p-irred} Let $P_1$ and $P_2$ be indecomposable projective modules and $d:P_1 \to P_2$ some map. Then $d$ is $p$-irreducible if and only if $P_1$ is isomorphic to a direct summand of the projective cover of $\rad P_2$ and $d$ is induced by this projection.
\end{lemma}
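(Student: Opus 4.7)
The plan is to prove both directions of the equivalence with a single tool: the projective cover $\pi\colon Q \twoheadrightarrow \rad P_2$, together with the defining property that $\ker\pi \subseteq \rad Q$.

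For the ``$\Rightarrow$'' direction, assume $d$ is $p$-irreducible. Since $d$ is not a retraction and $P_2$ is indecomposable (hence local), we have $\Im d \subseteq \rad P_2$. Projectivity of $P_1$ then gives a lift $\tilde d\colon P_1 \to Q$ with $\pi\tilde d$ equal to the corestriction of $d$, so that $d = j\pi\tilde d$ where $j\colon \rad P_2 \hookrightarrow P_2$ is the inclusion. The map $j\pi$ has image $\rad P_2 \subsetneq P_2$ and hence is not a retraction; by $p$-irreducibility of $d$, the map $\tilde d$ must be a section. Thus $P_1$ is isomorphic to a direct summand of $Q$, and $d$ is precisely the map induced by $\pi$ via this summand inclusion.

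For the ``$\Leftarrow$'' direction, write $Q = P_1 \oplus Q'$ with summand inclusion $\iota_1\colon P_1 \hookrightarrow Q$, so that $d = j\pi\iota_1$. The map $d$ is not a retraction (its image lies in $\rad P_2$) and not a section (otherwise $P_1$ would be a direct summand of the indecomposable $P_2$, forcing $d$ to be an isomorphism, contradicting $\Im d \subsetneq P_2$). Now suppose $d = f_2\circ f_1$ is a factorization through a projective module $P$ with $f_2$ not a retraction. Since $P_2$ is local, $\Im f_2 \subseteq \rad P_2$, so projectivity of $P$ supplies a lift $\tilde f_2\colon P \to Q$ of the corestriction of $f_2$. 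The identities $\pi\iota_1 = \pi\tilde f_2 f_1$ (both equal to the corestriction of $d$) give that $\iota_1 - \tilde f_2 f_1$ factors through $\ker\pi \subseteq \rad Q$. Reducing modulo radicals yields $\iota_1 \equiv \tilde f_2 f_1 \pmod{\rad Q}$; since $\iota_1$ is the inclusion of a simple summand of $Q/\rad Q$, it is injective modulo $\rad Q$, and hence so is $f_1$ modulo $\rad P$.

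The main obstacle will be the final step: upgrading injectivity modulo radicals to the statement that $f_1$ is a section. For this I plan to decompose $P$ into indecomposable projective summands and observe that $f_1(P_1) + \rad P$ spans a simple summand of $P/\rad P$ of the form $R/\rad R$ for some indecomposable summand $R$ of $P$. By uniqueness of projective covers, $R \cong P_1$ (both are indecomposable projectives with top $\cong P_1/\rad P_1$). The composition of $f_1$ with the projection $P \twoheadrightarrow R$ then induces an isomorphism modulo radicals, so by Nakayama's lemma it is itself an isomorphism, exhibiting $f_1$ as a split monomorphism and completing the proof.
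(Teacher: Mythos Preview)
Your proof is correct and follows essentially the same strategy as the paper: both directions hinge on lifting through the projective cover $\pi\colon Q \to \rad P_2$, and your $\Rightarrow$ argument is identical to the paper's. For $\Leftarrow$ the paper shows $f_1$ is a section by projecting $\tilde f_2 f_1$ onto the fixed summand $P_1 \subseteq Q$ and checking that this composite is an isomorphism, whereas you instead locate a suitable indecomposable summand $R$ of the intermediate module $P$ and project there --- a minor variation on the same idea.
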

\begin{proof}
$\Longleftarrow$

Let $d: P_1  \to P_2$ be as in the statement. Suppose that $d$ factors as $d=g \circ f$ where
$f:P_1 \to \tilde P$, $g: \tilde P \to P_2$ and $g$ is not a
retraction. Then $ \Im d \subset \Im g\subset \rad P_2 $. Let $
\pi: P_1 \oplus P_1' \to \rad P_2$ be the projective cover. Since $\tilde P$ is projective, there is a map $e:\tilde P \to P_1 \oplus P_1'$ such that $\pi e=g$. Then we have a commutative diagram

\[ \xymatrix{P_1 \ar[r]_f \ar[d]_d &\tilde P \ar[d]_g \ar[r]_e & P_1 \oplus P_1'\ar[d] \\
\Im d \ar[r] & \Im g \ar[r] & \rad P_2 }\]

Let top $P_1 \cong S_1$ and let $q: \rad P_2 \to S_1$ be a projection with $q(\Im d) \not =0$. Then $q(\Im ( e f)) \not =0$. Let $p: P_1 \oplus P_1' \to P_1$ denote the natural projection. Then $p e f \not \subset \rad P_1$. Therefore $p e f$ is an isomorphism.
So $f$ is a section and $d$ is $p$-irreducible.

$\Longrightarrow$

Conversely, let $s: P_1 \to P_2$ be a $p$-irreducible map. Then $s$ is not
surjective and therefore $\Im s \subset \rad P_2$. Let $P_0$ be a projective module and $i: P_0 \to \rad P_2$ be a projection. Then there is a map $h: P_1 \to P_0$ such that
$s=ih$. As $s$ is $p$-irreducible, $h$ is a
section and $P_1$ is a direct summand of $P_0$.
\end{proof}
Next we determine which $p$-irreducible maps induce an irreducible map in $Comp^{-,b}(\mathcal{P})$.

\begin{theo}\label{contr irred1}
Let $P_0$ be a projective module and let $d:P_0 \to P$ be a
$p$-irreducible map. Then there is an irreducible map in
$Comp^{-,b}(\mathcal{P})$ from the complex $p(\Coker(d))$ to the
contractible complex $\bar P$ if and only if $P_0$ is the
projective cover of $\rad P$ and $d$ is the projection onto $\rad P$.
\end{theo}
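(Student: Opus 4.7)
The plan is to combine Lemma~\ref{homotopic middle} and Lemma~\ref{p-irred}. Any irreducible $f : Q := p(\Coker d) \to \bar P$ must have, up to isomorphism, $f^0 = d$ and $f^1 = \id_P$, and Lemma~\ref{p-irred} already forces $P_0$ to be an indecomposable summand of the full projective cover $\tilde P_0$ of $\rad P$ with $d$ the restriction of the covering map. So the actual content of the theorem is to show that irreducibility forces $P_0 = \tilde P_0$.

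For the ``only if'' direction, suppose instead $\tilde P_0 = P_0 \oplus P_0'$ with $P_0' \neq 0$. Let $\tilde d : \tilde P_0 \twoheadrightarrow \rad P \hookrightarrow P$ be the full cover, so $\Coker \tilde d = S := P/\rad P$ is simple, and set $\tilde Q := p(\Coker \tilde d)$. The natural surjection $\Coker d \twoheadrightarrow S$ lifts, using projectivity of each $Q^i$, to a chain map $h : Q \to \tilde Q$ whose degree-$0$ component is the inclusion $P_0 \hookrightarrow \tilde P_0$ and whose degree-$1$ component is $\id_P$; let $g : \tilde Q \to \bar P$ be the obvious chain map with $g^0 = \tilde d$, $g^1 = \id_P$. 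A direct check gives $g \circ h = f$. Now $\tilde Q$ is indecomposable in $Comp^{-,b}(\mathcal{P})$: its cohomology is concentrated in degree $1$ and equals $S$, and being a minimal projective resolution it has no contractible summand. But $\tilde Q^0 = \tilde P_0$ is isomorphic neither to $Q^0 = P_0$ (since $P_0' \neq 0$) nor to $\bar P^0 = P$ (because $\tilde P_0$ is decomposable while $P$ is not). Hence $h$ is not a section and $g$ is not a retraction, contradicting the irreducibility of $f$.

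For the ``if'' direction, assume $P_0 = \tilde P_0$, so $\Coker d = S$ is simple and $Q = pS$. Suppose $f = g \circ h$ with $g : X \to \bar P$ not a retraction; I will construct a chain retraction $r : X \to Q$ of $h$. First, $g$ is a retraction iff $g^0$ is surjective (since $d_{\bar P}^0 = \id$, any section $s^0$ of $g^0$ extends to a chain section by $s^1 := d_X^0 s^0$), so $\Im g^0 \subseteq \rad P$; combined with $\Im g^0 \supseteq \Im(g^0 h^0) = \rad P$ this gives equality. By projectivity of $X^0$, lift $g^0$ through $d : P_0 \twoheadrightarrow \rad P \hookrightarrow P$ to $\phi : X^0 \to P_0$ with $d\phi = g^0$; then $d\phi h^0 = d$ together with $\ker d \subseteq \rad P_0$ (minimality) gives $\alpha := \phi h^0 \in 1 + \rad \End(P_0) \subseteq \operatorname{Aut}(P_0)$. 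Setting $r^0 := \alpha^{-1}\phi$ and $r^1 := g^1$, the section properties $r^0 h^0 = r^1 h^1 = \id$ hold, and the chain condition $d r^0 = r^1 d_X^0$ reduces to $d \alpha^{-1} \phi = d \phi$, which follows from $d\alpha = d$ (a consequence of $g^0 h^0 = d$) on right-multiplication by $\alpha^{-1}$.

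For $k \geq 1$ I extend $r$ inductively. The composition $r^{-k+1} d_X^{-k}$ has image in $\ker d_Q^{-k+1} = \Im d_Q^{-k}$ (using $d_X^2 = 0$ and the chain condition for $r^{-k+1}$), and by projectivity of $X^{-k}$ it lifts along the projective cover $d_Q^{-k} : P_k \twoheadrightarrow \Im d_Q^{-k}$ to some $\tilde r^{-k}$. Then $\nu := \tilde r^{-k} h^{-k}$ satisfies $d_Q^{-k}\nu = d_Q^{-k}$, so $\nu - \id$ has image in $\ker d_Q^{-k} \subseteq \rad P_k$ (by minimality of the resolution), hence $\nu \in \operatorname{Aut}(P_k)$; I set $r^{-k} := \nu^{-1}\tilde r^{-k}$, which forces $r^{-k} h^{-k} = \id$. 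The main technical obstacle is that this automorphism correction must preserve the chain condition, and the point is that right-multiplying $d_Q^{-k}\nu = d_Q^{-k}$ by $\nu^{-1}$ yields $d_Q^{-k}\nu^{-1} = d_Q^{-k}$, so $d_Q^{-k} r^{-k} = d_Q^{-k}\tilde r^{-k} = r^{-k+1} d_X^{-k}$ as required. The resulting chain map $r$ retracts $h$, so $h$ is a section and $f$ is irreducible.
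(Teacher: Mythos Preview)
Your proof is correct and follows essentially the same strategy as the paper. In the ``only if'' direction both you and the paper factor the putative irreducible map through the minimal resolution $\tilde Q = pS$ of the simple top and observe that neither half is a section/retraction. In the ``if'' direction the paper reaches the key point $f^0$ (your $h^0$) is a section by directly invoking the $p$-irreducibility of $d$ on the factorisation $d = g^0 h^0$, and then sketches the extension to a chain-level retraction $X \to p(\Coker d)$; you instead build the retraction $r$ degree by degree via projective liftings and automorphism corrections using minimality of the resolution, which makes the same construction explicit and self-contained. The two arguments are variations of the same idea.
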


\begin{proof}
$\Longleftarrow$

Let $p(\Coker(d)):= \cdots \to P_{-1} \to P_0 \stackrel{d} \to P
\to 0 \to \cdots $, where $P_0$ is the projective cover of $\rad
P$. We claim that the map $h$ given by

\[ \xymatrix{ \cdots
\ar[r] & P_{-1} \ar[r]\ar[d] & P_0 \ar[r]_{d}
\ar[d]_{d} & P \ar[r]\ar[d]_{\id} & 0 \ar[r]\ar[d] & \cdots
\\ \cdots \ar[r] & 0 \ar[r] & P \ar[r]_{\id}
 & P \ar[r]& 0 \ar[r] & \cdots } \]
is irreducible in $Comp^{-,b}(\mathcal{P})$.

 Suppose the given map factors through a complex $X \in Comp^{-,b}(\mathcal{P})$ via maps $f: p(\Coker(d))\to X$ and $g:X\to \bar P$ such that $g$ is not a retraction.
 As $\id_P= g^1 \circ f^1$, we have that $f^1$ is a section and $g^1$ is a retraction. We have that $X^1 \cong P\oplus \tilde P $ for some projective module $\tilde P$. If $ \Im g^1 d_X^0= P$, then $g$ is a retraction.
 Therefore $\Im g^0= \rad P$, as $\Im d \subset \Im g^1 d_X^0= \Im g^0$. Then $d$ factors through $g^0$ and $f^0$. As $d$ is irreducible by assumption, $f^0$ is a section. Therefore $X^0=P_0\oplus P'$ for some projective module $P'$ and $g^1 d_X^0(P')=g^0(P')=0$. We visualize this in the next diagram

 \[ \xymatrix{ \cdots
\ar[r] & P_{-1} \ar[r]\ar[d] & P_0 \ar[r]_{d}
\ar[d]_{\binom{\id}{0} } & P \ar[r]\ar[d]_{\binom{\id}{0}} & 0 \ar[r]\ar[d] & \cdots
\\
\cdots \ar[r] & X^{-1} \ar[r]\ar[d] & P_0 \oplus
P'\ar[r]_{\binom{d,0}{0,x}}
\ar[d]_{(d,0)} & P \oplus \tilde P  \ar[r]\ar[d]_{(\id,0)} & 0 \ar[r]\ar[d] & \cdots \\
\cdots \ar[r] & 0 \ar[r] & P \ar[r]_{\id}
 & P \ar[r]& 0 \ar[r] & \cdots } \]

Using the fact that $ \cdots \to P_{-1} \to \ker d $ is a minimal
projective resolution of $\ker d$, we can factorize the map $g$
through $s: X\to p \Coker(d)$ and $t:p \Coker (d) \to \bar P$ as
follows
\[ \xymatrix{\cdots \ar[r] & X^{-1} \ar[r]\ar[d]& P_0 \oplus
P'\ar[r]_{\binom{d,0}{0,x}}
\ar[d]_{(\id ,0)} & P \oplus \tilde P  \ar[r]\ar[d]_{(\id,0)} & 0 \ar[r]\ar[d] & \cdots \\
\cdots \ar[r] & P_{-1} \ar[r]\ar[d] & P_0 \ar[r]_{d}
\ar[d]_{d} & P  \ar[r]\ar[d]_{\id} & 0 \ar[r]\ar[d] & \cdots \\
\cdots \ar[r] & 0 \ar[r] & P \ar[r]_{\id}
 & P \ar[r]& 0 \ar[r] & \cdots } \]
So $s \circ f $ is an
isomorphism. Therefore $f$ is a section.

$\Longrightarrow$

Conversely, if there is an irreducible map $p (\Coker(d)) \to \bar P$, then $P_0$ is a direct summand of the projective cover of $\rad P$ and $d$ is the induced map by \ref{homotopic middle} and \ref{p-irred}. Let $r:\tilde P \to \rad P$ be the projective cover of $\rad P$ and suppose $\tilde P \cong P_0 \oplus P'$ where $P' \not =0$ is a projective module and let $i:  P_0 \to \tilde P$ be a section such that $r \circ i=d$.  Let $p(\Coker (r)) := \cdots \to S_{-1} \to \tilde P \stackrel{r} \to P
\to 0 \to \cdots $. Then the map of complexes $ p (\Coker (d)) \to \bar P$ induced by the $p$-irreducible map $d$ as in the first part of the proof factors through $p(\Coker (r))$ as follows:

 \[ \xymatrix{ \cdots
\ar[r] & P_{-1} \ar[r]\ar[d] & P_0 \ar[r]_{d}
\ar[d]_{i } & P \ar[r]\ar[d]_{\id} & 0 \ar[r]\ar[d] & \cdots
\\
\cdots \ar[r] & S_{-1} \ar[r]\ar[d] & \tilde P \ar[r]_{r}
\ar[d]_r & P  \ar[r]\ar[d]_{\id} & 0 \ar[r] \ar[d] & \cdots \\
\cdots \ar[r] & 0 \ar[r] & P \ar[r]_{\id}
 & P \ar[r]& 0 \ar[r] & \cdots } \]
The map between the first two rows is not a section and the map between the last two rows is not a retraction. This shows that the map is not irreducible in $Comp^{-,b}(\mathcal{P}).$ Therefore $P_0 \cong \tilde P$.
\end{proof}

Let $P_1$ be a summand of the projective cover of $\rad P $ and
$d: P_1 \to P$ the induced map. Then $p(\Coker (d) ) \in
K^{-,b}(\mathcal{P})$ is isomorphic to the stalk complex $P/ \Im d \in
D^b(A)$ and is therefore indecomposable.

\begin{cor}\label{simple der}Let $\theta$ be a stable Auslander-Reiten component of $\Gamma(D^b(A))$. Then $l_p$ is not additive if and only if there exists an indecomposable projective module $P$ such that
$P/ \rad P$ has finite projective and finite injective dimension and a shift of the stalk complex $P/ \rad P$ is in $\theta$.
\end{cor}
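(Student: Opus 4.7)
The strategy is to combine Remark \ref{subadditive}, which equates non-additivity of $l_p$ on $\theta$ with the appearance of a contractible summand $\bar P$ in some middle term $\cone(w)[-1]$ of an AR triangle in $\theta$, together with Theorem \ref{contr irred1}, which characterizes irreducible maps into $\bar P$ in $Comp^{-,b}(\mathcal{P})$ as precisely those coming from the projective resolution of the simple module $P/\rad P$.

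For the direction $(\Rightarrow)$, pick by Remark \ref{subadditive} an AR triangle $\tau(Z)\to_u\cone(w)[-1]\to_v Z$ in $\theta$ whose middle term decomposes as $\bar P\oplus M'$ in $Comp^b(\mathcal{P})$, with $\tau(Z)$ and $Z$ chosen as minimal representatives. The component $u_1:\tau(Z)\to\bar P$ of the inclusion $u$ is then neither a section (since $\tau(Z)$ is non-contractible and $\bar P$ is indecomposable) nor a retraction (since $\tau(Z)$ has no contractible summand). The main technical step is to verify that $u_1$ is irreducible in $Comp^{-,b}(\mathcal{P})$: given any factorization $u_1=g\circ f$, combining with $u_2$ and $\id_{M'}$ yields a factorization of $u$ through an intermediate complex, and the minimal left almost split property of $u$ in $K^{-,b}(\mathcal{P})$ provided by Lemma \ref{irred2}, together with $\bar P$ being projective in $Comp^{-,b}(\mathcal{P})$, forces either $f$ to be a section or $g$ to be a retraction. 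Once $u_1$ is irreducible, Lemma \ref{homotopic middle} and Theorem \ref{contr irred1} give $\tau(Z)\cong p(P/\rad P)$, so a shift of the stalk complex $S:=P/\rad P$ lies in $\theta$; stability of $\theta$ then forces $S$ to have finite projective and injective dimension.

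For the converse $(\Leftarrow)$, assume $S=P/\rad P$ has finite projective and finite injective dimension and (after shifting) $S\in\theta$. Consider the AR triangle $S\to_u M\to_v\tau^{-1}(S)$ in $\theta$, represented in $Comp^b(\mathcal{P})$ by the degreewise split SES $0\to p(S)\to M\to p(\tau^{-1}(S))\to 0$. By Theorem \ref{contr irred1} there exists an irreducible map $h:p(S)\to\bar P$ in $Comp^{-,b}(\mathcal{P})$. The plan is to extend $h$ along $u$, using that contractible complexes are injective objects in the Frobenius category $Comp^b(\mathcal{P})$, to obtain $\tilde h:M\to\bar P$ with $\tilde h\circ u=h$. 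Since $u$ is not a section in $K^{-,b}(\mathcal{P})$ (otherwise the AR triangle would split), Lemma \ref{retraction} shows it is not a section in $Comp^{-,b}(\mathcal{P})$ either, and irreducibility of $h$ applied to $h=\tilde h\circ u$ then forces $\tilde h$ to be a retraction. Hence $\bar P$ is a direct summand of $M$ in $Comp^b(\mathcal{P})$, and Remark \ref{subadditive} yields non-additivity of $l_p$.

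The main obstacle throughout is navigating the subtle passage between irreducibility in $K^{-,b}(\mathcal{P})$ and in $Comp^{-,b}(\mathcal{P})$: because $\bar P$ vanishes in the homotopy category, Lemma \ref{retraction} does not directly apply to maps targeting $\bar P$, so the relevant minimality arguments must be imported into $Comp^{-,b}(\mathcal{P})$ via the projective-injective character of contractible complexes inherited from the Frobenius structure on $Comp^b(\mathcal{P})$.
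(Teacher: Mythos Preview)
Your forward direction $(\Rightarrow)$ follows exactly the paper's route: invoke Remark \ref{subadditive} to get a contractible summand $\bar P$ of $\cone(w)[-1]$, obtain an irreducible map $\tau(Z)\to\bar P$ in $Comp^{-,b}(\mathcal{P})$, and apply Lemma \ref{homotopic middle} together with Theorem \ref{contr irred1} to identify $\tau(Z)$ with $p(P/\rad P)$ up to shift; stability then yields the finiteness of projective and injective dimension. The paper's proof simply cites Lemma \ref{retraction} for the existence of the irreducible map, whereas you rightly observe that this lemma does not literally apply (its hypotheses exclude contractible targets) and you sketch how to recover irreducibility using the minimal left almost split property of $u$ and the projective--injective nature of $\bar P$ in $Comp^b(\mathcal{P})$.

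Where you go beyond the paper is in the converse $(\Leftarrow)$: the paper's proof only treats one implication explicitly and leaves the other tacit. Your argument here---producing the irreducible map $h\colon p(S)\to\bar P$ from Theorem \ref{contr irred1}, extending it along the degreewise split monomorphism $u$ via injectivity of $\bar P$ in the Frobenius category $Comp^b(\mathcal{P})$, and then using irreducibility of $h$ together with the fact that $u$ is not a section (transferred from $K^{-,b}(\mathcal{P})$ to $Comp^{-,b}(\mathcal{P})$ by Lemma \ref{retraction}(3)) to force $\tilde h$ to be a retraction---is a clean and correct way to close the loop, and is a genuine addition to what the paper writes down.
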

\begin{proof} By \ref{subadditive} the function $l_p$ is not additive if and only if there exists an indecomposable complex  $L \in Comp^b(\mathcal{P})$ and a map $w: L\to \nu (L)$ in $Comp^{b}(\mathcal{P})$ that represents an Auslander-Reiten
triangle in  $\theta$ such that $\cone(w)[-1]\in Comp^b(\mathcal{P})$ contains a contractible summand.

By \ref{retraction} there is a complex $\bar P$ that is a direct summand of $\cone
(w)[-1]$ and an irreducible map $f:p \nu (L)[-1] \to \bar P$ in $Comp^{-,b}(\mathcal{P})$. By
Lemma \ref{p-irred}, $\nu (L)[-1]$ is isomorphic in $D^b(A)$ to
$P/ \rad P$ up to shift. As $\theta$ is a stable Auslander-Reiten component,
we have $p(P/\rad P ) \in K^b(\mathcal{P})$ and $i(P/ \rad P ) \in
K^b(\mathcal{I})$. Therefore $P/ \rad P$ has finite projective and
finite injective dimension.
\end{proof}
From this corollary it follows that if $A$ is self-injective, then
$l_p$ is an additive function on the Auslander-Reiten components
of $D^b(A)$.

We can deduce that Euclidean components always contain the stalk complex of a simple module.
\begin{theo}\label{euc}
Let $C$ be a stable component of the Auslander-Reiten quiver of
$D^b(A)$ with tree class an Euclidean diagram.
Then $C$ contains a simple module.
\end{theo}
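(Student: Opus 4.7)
The strategy is to prove that $l_p$ cannot be additive on $C$, and then invoke Corollary \ref{simple der} to produce the desired simple module (up to a shift) in $C$.

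Suppose for contradiction that $l_p$ is additive on $C$. By Riedtmann's theorem write $C \cong \Z[T]/G$, and view $l_p$ as a positive integer-valued $G$-invariant function on the vertices $(n,i) \in \Z \times T_0$ of $\Z[T]$. The mesh identifies the middle term of the Auslander-Reiten triangle ending in $(n,i)$ with $\bigoplus_{j \sim i} a_{ij}\,(n-1,j)$, where $a_{ij}$ are the valuations on edges of $T$, so additivity (Remark \ref{subadditive}) becomes the linear recursion
\[
l_p(n, i) = \sum_{j \sim i} a_{ij}\, l_p(n-1, j) - l_p(n-1, i), \qquad (n, i) \in \Z \times T_0.
\]

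Since $T$ is a finite Euclidean graph, its symmetric adjacency operator $A$ admits a strictly positive Perron-Frobenius eigenvector $\phi$ with eigenvalue $2$; set $g_n := \sum_i \phi(i)\, l_p(n, i)$. Multiplying the recursion by $\phi(i)$, summing over $i$, and using the eigenvector identity $\sum_i \phi(i) a_{ij} = 2\phi(j)$ gives
\[
g_n = \sum_j l_p(n-1, j)\cdot 2\phi(j) - g_{n-1} = 2 g_{n-1} - g_{n-1} = g_{n-1},
\]
so $g_n \equiv g_0$. Because $\phi > 0$ and $l_p > 0$, each summand satisfies $\phi(i)\, l_p(n,i) \le g_0$, whence $l_p(n,i) \le g_0 / \min_j \phi(j)$; thus $l_p$ is bounded on $C$.

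By \cite[4.15]{S}, boundedness of $l_p$ on a stable component forces $A$ to be derived equivalent to $D^b(kQ)$ for some finite Dynkin quiver $Q$, or $A$ to be simple. In either case every Auslander-Reiten component of $D^b(A)$ has Dynkin tree class (or reduces to a single point), contradicting the assumption that $C$ is Euclidean. Therefore $l_p$ is not additive on $C$, and Corollary \ref{simple der} produces an indecomposable projective $P$ such that $P/\rad P$ has finite projective and finite injective dimension and a shift of its stalk complex lies in $C$.

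The delicate step is the descent of $l_p$ to a $G$-invariant positive function on the universal cover $\Z[T]$, which is immediate since the admissible automorphism group $G$ acts on $C$ by auto-equivalences of $D^b(A)$ that preserve minimal projective representatives in $K^b(\mathcal{P})$. Once this is in place, the Perron-Frobenius pairing collapses the additivity recursion into a bounded $\tau$-invariant and closes the argument via \cite[4.15]{S}.
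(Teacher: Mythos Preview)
Your argument follows the paper's strategy exactly: assume $l_p$ is additive on $C$, deduce that $l_p$ is bounded, derive a contradiction, and then invoke Corollary~\ref{simple der}. The only differences are cosmetic --- the paper obtains boundedness by citing \cite[2.4]{Web} and the contradiction from \cite[4.14]{S}, whereas you reprove the former via the Perron--Frobenius pairing and use \cite[4.15]{S} for the latter --- and your final paragraph on $G$-invariance is unnecessary (and slightly off: $G$ acts on $\Z[T]$, not on $C$ or by auto-equivalences of $D^b(A)$), since any function on $C \cong \Z[T]/G$ tautologically pulls back to a $G$-invariant function on $\Z[T]$.
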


\begin{proof}
The function $l_p$ is subadditive on $C$. Suppose $l_p$ is
additive. By \cite[2.4]{Web}, the function $l_p$ takes bounded values
on $C$. This is a contradiction to \cite[4.14]{S}. Therefore $l_p$
is not additive. By \ref{simple der}, this means that $C$ contains a simple
module.
\end{proof}
As there are only finitely many isomorphism classes of simple modules, we get
\begin{cor}
The Auslander-Reiten quiver of $D^b(A)$ contains finitely many Auslander-Reiten components up to shift
that have Euclidean tree class. Their number is bound by the number of isomorphism classes of simple $A$-modules.
\end{cor}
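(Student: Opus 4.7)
The plan is to deduce this immediately from Theorem \ref{euc} via a counting argument. By \ref{euc}, every stable Auslander-Reiten component $C$ of $\Gamma(D^b(A))$ with Euclidean tree class contains the stalk complex of a simple $A$-module in some degree. I will use this to build an injection from the set of Euclidean components modulo shift into the set of isomorphism classes of simple $A$-modules.

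The first step is to observe that every component with Euclidean tree class is stable. The Riedtmann-type description $\Lambda \cong \Z[T]/I$ is stated for stable components, and a non-stable component of $\Gamma(D^b(A))$ contains vertices $Z \notin K^b(\mathcal{P})$ (or dually, vertices not in $K^b(\mathcal{I})$) at which no Auslander-Reiten triangle ends by \ref{existence}; such boundary vertices are incompatible with the shape of an Euclidean diagram, which is infinite and ``uniform'' in all directions. Hence \ref{euc} applies to every component relevant to the statement.

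Next, for each shift-equivalence class $[C]$, I would pick a representative $C$ and, using \ref{euc}, a simple $A$-module $S_C$ such that some shift $S_C[n]$ lies in $C$. This assignment descends to $[C]$ since shifting $C$ merely shifts the degree in which the stalk complex appears. To prove the resulting map $[C] \mapsto [S_C]$ is injective, suppose $[S_C] = [S_{C'}]$ as $A$-modules; then a common shift of this simple module lies in both $C$ and $C'$. Since each indecomposable of $D^b(A)$ belongs to a unique connected component of $\Gamma(D^b(A))$, the components $C$ and $C'$ must coincide up to a shift, so $[C] = [C']$ in the set of components modulo shift.

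Since $A$ is finite-dimensional, the set of isomorphism classes of simple $A$-modules is finite, and the injection above yields the desired bound. The main obstacle is the first step, namely confirming that Euclidean components are automatically stable so that Theorem \ref{euc} applies without caveat; once this structural point is settled, the rest of the argument is pure bookkeeping using only that indecomposables lie in unique connected components.
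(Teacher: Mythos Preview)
Your argument is correct and matches the paper's approach; the paper gives only the one-line remark ``As there are only finitely many isomorphism classes of simple modules, we get'' and leaves the pigeonhole/injection step implicit, which you have spelled out.

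One clarification on the point you flagged as the main obstacle: in the paper's framework the worry about stability does not arise. The Riedtmann structure theorem $\Lambda \cong \Z[T]/I$ (and hence the very notion of ``tree class'') is stated only for stable components, so saying that a component has Euclidean tree class already presupposes stability. Thus Theorem~\ref{euc} applies without further argument. Also, a small slip: an Euclidean diagram $T$ is a \emph{finite} graph (an affine Dynkin diagram); it is the resulting component $\Z[T]$ that is infinite.
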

\begin{lemma}[Irreducible maps that do not appear in Auslander-Reiten triangles]
Let $f:B \to C$ be an irreducible map in $D^b(A)$ that does not appear in an Auslander-Reiten triangle. Then $B, C \not \in K^{b}(\mathcal{P})$ and $ B, C \not \in K^b(\mathcal{I})$.
\end{lemma}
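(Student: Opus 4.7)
The main tool is the iff criterion stated right after Theorem \ref{existence} (a direct consequence of Lemma \ref{irred}): an irreducible map $f: N \to M$ between indecomposables of $D^b(A)$ is represented by an arrow of the Auslander-Reiten quiver if and only if $N \in K^b(\mathcal{I})$ or $M \in K^b(\mathcal{P})$. Since ``$f$ does not appear in an Auslander-Reiten triangle'' is exactly ``$f$ is not an arrow of the Auslander-Reiten quiver'', the contrapositive applied to $f: B \to C$ immediately yields $B \notin K^b(\mathcal{I})$ and $C \notin K^b(\mathcal{P})$.

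To obtain the remaining claims $B \notin K^b(\mathcal{P})$ and $C \notin K^b(\mathcal{I})$ I would argue by contradiction. If $B \in K^b(\mathcal{P})$, then Theorem \ref{existence} produces an Auslander-Reiten triangle $\tau(B) \to M \to B \to \tau(B)[1]$ ending in $B$, and $\tau(B) \in K^b(\mathcal{I})$ by the natural equivalence $\tau = \nu[-1]$. Viewing this same triangle as one starting in $\tau(B)$ and combining it with the irreducible map $f: B \to C$ via Lemma \ref{irred}(1), I aim to produce either an Auslander-Reiten triangle starting in $B$ or one ending in $C$; either conclusion contradicts the first step. The case $C \in K^b(\mathcal{I})$ is handled symmetrically, using the triangle starting in $C$, the inverse equivalence $\tau^{-1}: K^b(\mathcal{I}) \to K^b(\mathcal{P})$, and Lemma \ref{irred}(2) applied to an appropriate factorisation of $f$.

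The main obstacle is precisely this second step. Lemma \ref{irred} only gives a factorisation of $f$ when the Auslander-Reiten triangle sits at the source $B$ and starts there, or at the target $C$ and ends there. The triangles produced from $B \in K^b(\mathcal{P})$ or $C \in K^b(\mathcal{I})$ are on the opposite side of $f$, so turning them into a factorisation that actually places $f$ as an arrow of $\Gamma(D^b(A))$ is the technical heart of the argument; I expect this to require exploiting the Nakayama-functor equivalence $\tau: K^b(\mathcal{P}) \to K^b(\mathcal{I})$ together with the definition of irreducibility to transport the almost-split factorisation from the wrong side of $f$ to the correct one.
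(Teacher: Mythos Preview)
Your first step is correct and matches the paper exactly: from Theorem \ref{existence} and the criterion stated after it, $B \notin K^b(\mathcal{I})$ and $C \notin K^b(\mathcal{P})$ are immediate.

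For the remaining two assertions your proposal has a genuine gap, which you yourself flag. Assuming $B \in K^b(\mathcal{P})$ gives an Auslander--Reiten triangle \emph{ending} in $B$, but Lemma \ref{irred} would need a triangle \emph{starting} in $B$ (to factor $f:B\to C$ via part (1)) or \emph{ending} in $C$ (via part (2)); the triangle you have sits on the wrong side of $f$. Your plan to ``transport'' the almost-split data across $f$ via the equivalence $\tau:K^b(\mathcal{P})\to K^b(\mathcal{I})$ comes with no concrete mechanism, and I do not see one: $\tau$ moves $B$ to $\tau(B)$, not to $C$, and nothing in the hypotheses ties $C$ to any bounded complex of injectives. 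As stated, the outline does not close.

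The paper's argument for this step is entirely different and does not invoke Auslander--Reiten triangles at all. It is a truncation argument: assume $B\in K^b(\mathcal{P})$, let $n$ be minimal with $B^n\neq 0$, and represent $C$ in $Comp^{-,b}(\mathcal{P})$. Since we already know $C\notin K^b(\mathcal{P})$, the complex $C$ is unbounded to the left. The chain map $f$ then factors as
\[
B \xrightarrow{\ g\ } \sigma^{\ge n-1}(C) \xrightarrow{\ h\ } C,
\]
with $h$ the inclusion of the brutal truncation. Now $h$ is not a retraction because $\sigma^{\ge n-1}(C)\not\cong C$ (one is bounded, the other is not), and $g$ is not a section; together this contradicts the irreducibility of $f$. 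The case $C\in K^b(\mathcal{I})$ is handled by the dual truncation in $Comp^{+,b}(\mathcal{I})$. This route is elementary and sidesteps the almost-split machinery you were trying to bring in; the key idea you were missing is simply to exploit that $C$ is unbounded while $B$ is bounded, and to factor through a bounded piece of $C$.
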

\begin{proof}
By \ref{existence} it is clear that $B \not \in K^b(\mathcal{I})$
and $C \not \in K^b(\mathcal{P})$. Let us assume that $B \in
K^b(\mathcal{P})$ and let $n\in \N$ be minimal such that $B^n \not
=0$. Then $f$ factorizes through $\sigma^{\ge n-1}(C)$, where $C$
is represented as a complex in $Comp^{-,b}(\mathcal{P})$. Let $f=h
\circ g$ be this factorization, then $g$ is not a section, as $f$
is not a section and $h$ is not a retraction as $ \sigma^{\ge
n-1}(C) \not \cong C$. This is a contradiction to the fact that
$f$ is irreducible. Therefore $B  \not \in K^b(\mathcal{P})$.
Analogously, we can show that $C \not \in K^b(\mathcal{I})$.
\end{proof}
An example of such an irreducible map not appearing in the Auslander-Reiten quiver of $D^b(A)$ is given at the end of \cite{S}.

\section{Auslander-Reiten triangles of Nakayama algebras}
In this section we analyze the Auslander-Reiten quiver of certain
Nakayama algebras $A$ with finite global dimension. The results at the end of this section may help to illustrate
results from the previous sections.

Let \[A_n: 1 \to 2 \to \cdots \to n-1 \to n \] and let $I$ be an
ideal of the path algebra of $k A_n$. We define $A:= kA_n/I$, then
$A$ is a Nakayama algebra of finite global dimension. We denote by
$J$ the ideal generated by the paths of length one in $kA_n$.

We denote the starting vertex of a path $b$ by $s(b)$ and the vertex that $b$ ends in by $t(b)$. With this notation we have $s(n-1,n)=n$ in the above diagram.
Then $s(b)\ge t(b)$ and $b_1 b \not= 0$ for two paths $b_1$ and $b$ if $t(b)=s(b_1)$.

We denote by $S_i$ the simple modules at the vertex $i$, by $P_i$ the
projective indecomposable modules with $P_i/ \rad P_i =S_i$ and by
$I_i$ the injective indecomposable modules with $\soc I_i=S_i$.
Whenever $S_j$ occurs in $I_i$ then it occurs precisely once,
therefore $\dim_k \Hom_A(I_i, I_j) \le 1$. So we can fix maps
$d_i^j:I_i \to I_j$ which are non-zero if and only if $\dim_k
\Hom_A(I_i, I_j) =1$ such that $d_i^i= \id$ and $d_k^j \circ
d_i^k=d_i^j $ for all $1 \le i \le k \le j \le n$. When
constructing injective resolutions we can always take these maps.
Analogously, when we construct minimal projective resolutions we
can always take maps $\nu_A^{-1}(d_i^j)$.

\smallskip

We investigate the global dimension of Nakayama algebras with
quiver $A_n$. We call two paths intersecting, if they have at
least one arrow in common. Note that we can always choose a
generating set $B$ of an ideal $I$ in $kA_n$ that consists of paths
in $k A_n$. If the generating set $B$  is minimal there are no
two paths in $B$ that contain each other. We call a subset of $B$ intersecting, if every path in that
set intersects with at least one other path in the set.
\begin{lemma}\label{gdim}
Let $A= kA_n/I$. Let $B$ be a minimal generating set of $I$ that consists of paths in $kA_n$.
Then the global dimension of $A$ is smaller or equal to the maximal cardinality of all intersecting subsets of $B$ plus one.
\end{lemma}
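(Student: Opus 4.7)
The plan is to bound $\Pd(S_i)$ for every simple $A$-module $S_i$ by analysing the minimal projective resolution
\[ \cdots \to P^{k+1}\to P^k \to \cdots \to P^1 \to P^0=P_i \to S_i \to 0 \]
and showing that the relations in $B$ ``consumed'' at successive steps form an intersecting subset of $B$. Because $A=kA_n/I$ is Nakayama, every indecomposable and in particular every syzygy $\Omega^k S_i$ is uniserial, so $P^k$ is the projective cover of the uniquely determined top composition factor of $\Omega^k S_i$ and $P^k\cong P_{j_k}$ is determined by $j_k$. The length $\ell_j$ of $P_j$ is the length of the shortest relation of $B$ starting at $j$ in the sense of $s$, or the maximal possible length if no such relation exists.

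First I would establish a local monotonicity: if $\alpha$ is the arrow in $A_n$ with $s(\alpha)=i$ and $t(\alpha)=i-1$, then $\ell_{i-1}\ge \ell_i-1$. If this failed, there would exist a relation $b'\in B$ starting at $i-1$ of length strictly less than $\ell_i-1$, and then the composition of $\alpha$ with $b'$ would give a path starting at $i$ which is zero in $A$ but shorter than the shortest relation $b\in B$ starting at $i$, contradicting minimality of $B$. This ensures that at every step $P^{k+1}\to P^k$ is the surjection of $P_{j_{k+1}}$ onto a power $\rad^{s_k}(P_{j_k})$ with uniserial kernel, so the resolution is completely determined combinatorially by the sequence $j_0,j_1,\ldots$.

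Next I would identify, at each step $k\ge 1$ at which $\Omega^{k+1} S_i\ne 0$, a relation $b_k\in B$ witnessing the failure of $P^k\to P^{k-1}$ to be injective: the composition $P^{k+1}\to P^k\to P^{k-1}$ corresponds, after choosing path-algebra generators of each $P_{j_\ell}$, to multiplication by a path in $kA_n$ that is zero in $A$, and minimality of $B$ forces this path to contain a unique element of $B$, which I take as $b_k$. The key combinatorial observation is then that consecutive $b_k$ and $b_{k+1}$ share at least one arrow, namely the arrow along which the map $P^{k+1}\to P^k$ is given, which is simultaneously the last arrow of $b_k$ and the first arrow of $b_{k+1}$. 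Hence $\{b_1,\ldots,b_{\Pd(S_i)-1}\}$ is an intersecting subset of $B$ of cardinality $\Pd(S_i)-1$, and taking the supremum over $i$ yields the claimed bound on the global dimension of $A$.

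The main obstacle is the detailed verification that the extracted relation $b_k$ is uniquely determined and that the joining arrow between $b_k$ and $b_{k+1}$ actually lies in both relations. This rests on the minimality of $B$ and on a careful tracking of the motion of the top vertex $j_k$ of $\Omega^k S_i$ along $A_n$ as $k$ grows, and must be supplemented by a separate treatment of the boundary case $\Pd(S_i)\le 1$, where no relation is consumed and the empty intersecting subset already suffices to give the bound.
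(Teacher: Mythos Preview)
Your approach is essentially the paper's, viewed from the opposite direction. The paper fixes a relation $b_i\in B$, takes the maximal intersecting chain $\{b_m,\ldots,b_i\}$ ending at it, and then writes the minimal projective resolution of $S_{s_i}$ explicitly as
\[
0\to P_{t_m}\to\cdots\to P_{t_i}\to P_{s_i-1}\to P_{s_i},
\]
reading the length bound off directly. You instead start from an arbitrary simple, run its resolution, and extract one relation $b_k\in B$ per step, assembling these into an intersecting subset. The underlying combinatorics is identical: in both arguments the successive projectives are indexed by the terminal vertices $t(b_k)$ of a chain of overlapping relations. Your presentation is a bit more conceptual; the paper's is more computational.

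A few points need tightening before this is a proof. Your formula for $\ell_j$ is not correct: the length of $P_j$ is determined by the shortest zero path from $j$, which can be forced by a relation $b$ with $s(b)<j$, not only by one starting at $j$. The monotonicity $\ell_{j-1}\ge\ell_j-1$ still holds (indeed it is automatic for a Nakayama algebra), but the contradiction you want is with the definition of $\ell_j$, not with minimality of $B$. More importantly, the uniqueness of $b_k$ that you flag as the main obstacle follows because the path from $j_{k-1}$ to $j_{k+1}$ is the \emph{shortest} zero path from $j_{k-1}$: if it contained two distinct members of $B$, the one with the larger terminal vertex would already lie in a strictly shorter zero path from $j_{k-1}$. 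This forces $t(b_k)=j_{k+1}$; since the $j_{k+1}$ are strictly decreasing, the $b_k$ are distinct, giving the cardinality $\Pd(S_i)-1$. Finally, the shared arrow is the one ending at $j_{k+1}$: it is the last arrow of $b_k$, and it lies in $b_{k+1}$ because $s(b_{k+1})>j_{k+1}>t(b_{k+1})$, but it is not in general the \emph{first} arrow of $b_{k+1}$ as you claim.
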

\begin{proof}
We denote the starting vertex of $b_i$ by $s_i:=s(b_i)$ and the vertex that $b_i$ ends in by $t_i:=t(b_i)$. We assume that paths generating $B$ are ordered such that $s_i \le s_j$ if $i < j$. For a fix $ i$ we consider $\{ b_{m}, \cdots ,b_{i} \}$, the intersecting set of $B$ that does not contain elements $b_j$ for $j >i$ and is maximal with that property.

Suppose first that for all $m < j < i $ there is an element $s_w$ for some $m \le  w  \le i$ with $s_w \in [t_{j}, t_{j+1}]$

Then a minimal projective resolution of $S_{s_i}$ is of the form

\[ 0\to P_{t_{m}} \to \cdots  \to P_{t_{i-1}} \to P_{t_i} \to P_{s_i-1} \to  P_{s_i}.\]Therefore $S_{s_i}$ has projective dimension $i-m+2$.

Suppose there is some $j$ with $m <  j < i $ such that the interval
$[t_{j}, t_{j+1}]$ does not contain any $s_w$ for all $m \le w \le
i$. Let $j$ be maximal with this property, then a minimal projective
resolution of $S_{s_i}$ is given by

\[ 0\to P_{t_j} \to \cdots  \to P_{t_{i-1}} \to P_{t_i} \to P_{s_i-1} \to  P_{s_i}.\]

Therefore $S_{s_i}$ has projective dimension $i-j+2$. As $j\ge m+1$, we have that the projective dimension is
$\le i-m+1$.

If $j \not = s_l$ for all $b_l \in B$, then a minimal
projective resolution of $S_j$ is given by $ 0 \to P_{j-1} \to P_j
$ and $S_j$ has therefore projective dimension 1.

We can use the same argument for injective resolutions. This proves the lemma.
\end{proof}

\begin{cor}
The algebra $A$ has global dimension $n-1$ if and only if $I$ is generated by all paths of length 2.
\end{cor}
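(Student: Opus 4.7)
The plan is to combine Lemma \ref{gdim} with a direct computation of the syzygies of $S_n$.

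For the ``if'' direction, suppose $I$ is generated by all paths of length $2$. Then $P_1=S_1$ and $\rad P_i=S_{i-1}$ for $i\ge 2$. Iterating, the minimal projective resolution of $S_n$ reads
\[
0\to P_1\to P_2\to\cdots\to P_{n-1}\to P_n\to S_n\to 0,
\]
which has length $n-1$, so $\mathrm{gldim}\,A\ge n-1$. Conversely, the $n-2$ paths of length $2$ form a minimal generating set $B$ of $I$, and since consecutive ones share an arrow, $B$ itself is intersecting of cardinality $n-2$. Lemma \ref{gdim} then gives $\mathrm{gldim}\,A\le n-1$.

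For the ``only if'' direction, suppose $\mathrm{gldim}\,A=n-1$, and let $B$ be a minimal generating set of $I$ consisting of paths in $kA_n$. Let $B'\subseteq B$ be a maximal intersecting subset; Lemma \ref{gdim} yields $|B'|\ge n-2$. The key step is to combine this with the minimality of $B$ (which implies no path in $B$ is a proper subpath of another) to determine $B'$ exactly. Order $B'=\{b_1,\dots,b_k\}$ by increasing source $s_i=s(b_i)$. If $s_i=s_{i+1}$ or $t_i=t_{i+1}$ for some $i$, then one of $b_i$, $b_{i+1}$ is a proper subpath of the other, contradicting minimality; hence both $(s_i)$ and $(t_i)$ are strictly increasing. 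Combining this with $t_i\ge 1$, $s_i\le n$ and $s_i\ge t_i+2$ (admissibility) forces $k\le n-2$, so $k=n-2$, $t_i=i$ and $s_i=i+2$. In particular, each $b_i$ has length exactly $2$, so $B'$ is the set of all $n-2$ length-$2$ paths in $kA_n$.

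Since $B\supseteq B'$ already contains every length-$2$ path, any further $b\in B$ would have length $\ge 2$ and thus contain some $b_i\in B'$ as a proper subpath, again contradicting minimality. Therefore $B=B'$ and $I$ is generated by all paths of length $2$. The main obstacle is the monotonicity/pigeonhole argument in the second step; everything else is essentially bookkeeping around Lemma \ref{gdim}.
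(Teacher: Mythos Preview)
Your proof is correct and, in the ``only if'' direction, takes a genuinely different route from the paper. The paper argues that if $\mathrm{gldim}\,A=n-1$ then $S_n$ must have projective dimension $n-1$, and then goes back into the explicit projective resolutions constructed in the proof of Lemma~\ref{gdim} (the ``first case'') to read off that the maximal intersecting set is $\{b_1,\dots,b_{n-2}\}$ with $t_j=j$, forcing all generators to have length~$2$. You instead use Lemma~\ref{gdim} purely as a black box to get $|B'|\ge n-2$, and then run an independent pigeonhole argument on the endpoints of paths in a minimal generating set to force $|B'|\le n-2$ and pin down each $b_i$. Your argument is more self-contained and does not require reopening the proof of the lemma; in fact your monotonicity step never uses that $B'$ is intersecting, so it really shows $|B|\le n-2$, which makes your final paragraph (ruling out extra elements of $B$) redundant but harmless.

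One small imprecision: from $t_i\ne t_{i+1}$ alone you cannot conclude that $(t_i)$ is strictly increasing, since you have only ordered by source. The missing half-line is that once $(s_i)$ is strictly increasing, $t_i\ge t_{i+1}$ would make the interval $[t_i,s_i]$ sit inside $[t_{i+1},s_{i+1}]$, so $b_i$ would be a subpath of $b_{i+1}$, again contradicting minimality; hence $t_i<t_{i+1}$. With that one sentence added, the argument is complete.
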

\begin{proof}
If $I$ is generated by all paths of length 2, then we are in the
first case of \ref{gdim} and $S_i$ has projective dimension $i-1$ for all $1 \le i \le n$.
Suppose $A$ has global dimension $n-1$, then $S_n$ has projective dimension $n-1$. So we are in the first case of \ref{gdim}.
Therefore a maximal intersecting set of $B$ is given by $\{b_1, \cdots ,b_{n-2}\}$ with ${t_j}=j$ and $s_{n-2}=n$ for all $1 \le j \le n-1$.
Then the $b_j$ have to be paths of length two. Furthermore $I$ is generated by the $b_j$ as any other path in $A_n$ of lenght $>1$ can be written
as a product of the $b_j$.
\end{proof}
By \cite[3.5]{ASS} every indecomposable module $M$ of $A$ is
isomorphic to $P_i/\rad^t(P_i)$. If $M$ is projective then we
choose $t$ minimal such that $\rad^t(P_i)=0$. Then $i$ and $t$ are
uniquely determined. We fix $i$ and $t$. Let $l:=l(P_i)$ and $
\bar l :=l (I_{i-t+1})$. If pdim$M \ge 1$, then the minimal
projective resolution of $M$ starts with $ \cdots \to P_{i-l} \to
P_{i-t} \to P_i$. If idim$M\ge 1$ then the minimal injective
resolution of $M$ starts with $ I_{i-t+1}  \to I_{i+1} \to I_{i+
\bar l-t +1}\to \cdots $. We thereby set $I_k=0$ and $P_k=0$ if
$k\le 0$.

\medskip

We introduce three conditions:

(1) We have $d_{i-l}^{i-t+1}=0$ or pdim$M \le 1$.

(2) We have $d_{i}^{i+ \bar l -t +1}=0$ or idim$M\le 1$.

(3) We have $d_{i-t}^{i+1}=0$ or pdim$M=0$ or idim$M=0$.

\smallskip

The next lemma will be used to calculate concrete examples.
\begin{lemma}\label{pre}
Let $w:M \to \nu(M)$ define an Auslander-Reiten triangle
terminating in $M$.

(a) $\cone(w)$ has a direct summand
 isomorphic to $ \nu (\Omega(M))[1]$ if and only if (1) and (3) hold.

(b) $\cone(w)$ has a direct summand
isomorphic to $\Omega^{-1}(M)$ if and only if (2) and (3) hold.

(c) $\cone(w)$ has a direct summand isomorphic to
\[ \cdots \to 0 \to I_{i-t+1} \to I_i \to 0 \cdots\]if and only if (1) and (2) hold.

(d)  $\cone(w)$ decomposes as sum of the indecomposable complexes isomorphic to
$\Omega^{-1}(M)$, $\cdots \to 0 \to I_{i-t+1} \to I_i \to 0
\cdots$, and $ \nu (\Omega(M))[1]$ if and only if (1), (2) and (3) hold.

(e) If at most one condition (1)-(3) holds, then $\cone(w)$ is indecomposable.
\end{lemma}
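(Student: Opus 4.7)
My plan is to compute the cone of the connecting map $w:M\to\nu(M)$ of the Auslander-Reiten triangle ending in $M$ explicitly, and then detect direct summands by recognising three distinguished ``split points'' controlled by the three conditions. First I would represent $M$ by its minimal projective resolution $pM:\cdots\to P_{i-l}\to P_{i-t}\to P_i\to 0$, and represent $\nu(M)$ by the complex obtained from $\nu_A(pM)$, using the minimal injective resolution $I_{i-t+1}\to I_{i+1}\to I_{i+\bar l-t+1}\to\cdots$ of $M$ to describe the part of $\nu(M)$ above degree zero. Following the proof of Lemma \ref{middle part P} applied with $M$ in place of $P$, I choose $w$ so that its degree-zero component $w^0:P_i\to I_{i-t+1}$ induces the projection $P_i/\rad P_i\to\soc I_{i-t+1}=S_{i-t+1}$; the higher components of $w$ are then forced up to homotopy and, using the paper's canonical maps $d_j^k$, can be read off explicitly. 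The middle term $\cone(w)[-1]$ becomes a bicomplex whose top row is (the shift of) $pM$ and whose bottom row is the above complex representing $\nu(M)$, joined by structure maps that are, up to sign, the chosen $d_j^k$ or their preimages under $\nu_A$.

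Second, with this explicit bicomplex in hand I locate three ``hinges'' whose decoupling yields the three candidate summands. The composition $P_{i-l}\to P_{i-t}\to I_{i-t+1}$ (top-row differential followed by the connecting map) equals $d_{i-l}^{i-t+1}$ up to a non-zero scalar, so its vanishing --- i.e.\ condition (1) --- allows a change of basis that separates off the upper projective block, which is isomorphic to $\nu(\Omega(M))[1]$. Dually, the composition $P_i\to I_{i-t+1}\to I_{i+\bar l-t+1}$ is a scalar multiple of $d_i^{i+\bar l-t+1}$, and its vanishing (condition (2)) separates off the lower injective block, which is $\Omega^{-1}(M)$. The middle composition $P_{i-t}\to I_{i-t+1}\to I_{i+1}$ is a multiple of $d_{i-t}^{i+1}$, and its vanishing (condition (3)) decouples the short two-term complex $I_{i-t+1}\to I_i$ from the rest. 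The ``or pdim$M\leq 1$''-type alternatives in each condition handle degenerate cases in which the corresponding hinge does not exist (the relevant projective or injective simply is not part of the complex), so the condition is then vacuous.

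Granted this correspondence, (a), (b), (c) follow in one direction from the explicit block decomposition and in the other direction by a map-chasing argument that uses minimality of the resolutions together with the fact that $\dim_k\Hom_A(I_a,I_b)\leq 1$ (so any potential obstruction must be a scalar multiple of the canonical $d_a^b$). Part (d) is obtained by applying all three splittings simultaneously. Part (e) follows from Corollary \ref{indecomposable}: if at most one of the three compositions is zero, then $\cone(w)[-1]$ can be represented as a complex of indecomposable projectives with no intermediate differential vanishing, and hence is indecomposable. The main difficulty will be the bookkeeping in step one: arranging $\cone(w)[-1]$ as an honest two-row complex and tracking signs of the $d_j^k$ at the three hinges, so that the ``if and only if'' in parts (a)-(d) is not obscured by accidental cancellations coming from triangle-convention signs.
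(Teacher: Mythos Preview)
Your overall strategy---compute $\cone(w)$ explicitly and then test whether each of the three candidate subcomplexes splits off---is precisely what the paper does. But your pairing of conditions with summands is wrong, and this is a genuine gap, not bookkeeping. You assert that condition (1) alone separates off $\nu(\Omega(M))[1]$, condition (2) alone separates off $\Omega^{-1}(M)$, and condition (3) alone separates off the two-term middle piece; the lemma says each summand requires \emph{two} of the three conditions. The reason is visible already in case (a). The only candidate retraction from $\cone(w)$ onto the left tail $\cdots\to I_{i-l}\to I_{i-t}$ is the obvious projection, and any section must send $I_{i-t}$ into $I_{i-t+1}\oplus I_{i-t}$ via $\binom{\lambda d}{\mathrm{id}}$ for some scalar $\lambda$. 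Commutativity with the differential into $I_{i+1}\oplus I_i$ forces $(\lambda-1)d_{i-t}^{\,i}=0$, hence $\lambda=1$ since $d_{i-t}^{\,i}\neq 0$, and then also $\lambda\, d_{i-t}^{\,i+1}=0$; commutativity with the differential out of $I_{i-l}$ forces $\lambda\, d_{i-l}^{\,i-t+1}=0$. Thus (1) \emph{and} (3) are both needed. Your ``hinge'' picture is right in spirit---each condition does control one diagonal composition in the cone---but detaching any one of the three blocks requires cutting the \emph{two} hinges adjacent to it, not one.

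Two further problems. First, your model of the cone mixes categories: you describe the top row as $pM$ (projectives) and the bottom row as a complex of injectives, and then speak of maps such as $P_{i-t}\to I_{i-t+1}$. One cannot form $\cone(w)$ that way. The paper works entirely in $Comp^{+,b}(\mathcal{I})$, representing $M$ by $iM$ and $\nu(M)$ by $\nu_A(pM)$, so that $w$ is a single map $d:I_{i-t+1}\to I_i$ in degree zero and the cone is an honest complex of injectives. Second, your argument for (e) via Corollary~\ref{indecomposable} fails: the two middle degrees of $\cone(w)$ are $I_{i-t+1}\oplus I_{i-t}$ and $I_{i+1}\oplus I_i$, which are not indecomposable, so that corollary does not apply regardless of how many hinges vanish. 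The paper's route is different: it observes that $\sigma^{\ge 1}(\cone(w))$ and $\sigma^{\le -1}(\cone(w))$ are indecomposable (a minimal injective resolution, respectively $\nu$ of a minimal projective resolution, of an indecomposable module), so any direct summand of $\cone(w)$ must be one of the three candidates already analysed. If at most one of (1)--(3) holds, parts (a)--(c) rule out all three, and $\cone(w)$ is indecomposable.
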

\begin{proof}
We will write $d$ instead of $d_i^j$ for an easier presentation.
The connecting map $w$ of the Auslander-Reiten triangle ending in
$M$ can be taken as a map in $Comp^{+,b}(\mathcal{I})$
\[ \xymatrix{\cdots \ar[r] & 0\ar[r] \ar[d]& 0\ar[d] \ar[r] & I_{i-t+1} \ar[d]_d \ar[r]_d & I_{i+1}
\ar[r]_d \ar[d]&I_{i+ \bar l-t+1} \ar[d]\ar[r]& \cdots \\
\cdots \ar[r] & I_{i-l} \ar[r]_d & I_{i-t} \ar[r]_d  & I_i \ar[r]
&0 \ar[r]& 0\ar[r] & \cdots}\] Note that if pdim$M=0$, then
$I_{i-t}$ does not appear and all entries left of $I_{i-t}$ are
zero. If pdim$M=1$, then all entries left of $I_{i-t} $ are zero.
Analogously if idim$M=0$, then $I_{i+1}$ does not appear and all
entries right of $I_{i+1} $ are zero. If idim$M=1$, then all
entries right of $I_{i+1} $ are zero. Then cone$(w)$ is given by
the complex

\[ \xymatrix{ \cdots \ar[r] & I_{i-l} \ar[r]_{\binom{d}{0}}& I_{i-t+1}
\oplus I_{i-t}  \ar[r]_{\binom{-d,0}{d,d}} & I_{i+1} \oplus I_{i}
\ar[r]_{(d,0) } & I_{i+ \bar l -t+1} \ar[r] & \cdots }\]

We determine the direct summands of $\cone(w)$ in $Comp^{+,b}(\mathcal{I})$ as by \ref{retraction}
the direct sum decomposition in $Comp^{+,b}(\mathcal{I})$ gives the direct sum decomposition in
$D^b(A)$.

We have a surjective map $f$ given by

 \[ \xymatrix{ \cdots \ar[r] & I_{i-l}\ar[d]_{\id}  \ar[r]_{\binom{d}{0}}& I_{i-t+1}
\oplus I_{i-t}  \ar[r]_{\binom{-d,0}{d,d}} \ar[d]_{(0, \id)} & I_{i+1} \oplus I_{i}
\ar[r]_{(d,0) }\ar[d] & I_{i+ \bar l -t+1} \ar[r] \ar[d] & \cdots\\ \cdots \ar[r] &
I_{i-l} \ar[r]_d & I_{i-t} \ar[r]& 0 \ar[r]
 & 0  \ar[r] & \cdots}\]
Note that up to multiplication with a non-zero scalar, this is the only surjective map between the two complexes.
If there is a non-zero  map $g$ from the bottom complex to the top complex such that $f \circ g=\id$, then it is of the following form
\[ \xymatrix{ \cdots \ar[r] & I_{i-l} \ar[r]_{\binom{d}{0}}& I_{i-t+1}
\oplus I_{i-t}  \ar[r]_{\binom{-d,0}{d,d}} & I_{i+1} \oplus I_{i}
\ar[r]_{(d,0) } & I_{i+ \bar l -t+1} \ar[r] & \cdots\\ \cdots
\ar[r] & I_{i-l}\ar@/_/[u]_{\id}  \ar[r]_d & I_{i-t}
\ar@/_/[u]_{\binom{\lambda d}{\id}} \ar[r]& 0 \ar[r] \ar@/_/[u] &
0 \ar@/_/[u] \ar[r] &  \cdots }\] where $\lambda \in k$. We need to determine under which conditions $g$ is a chain map.
As the middle square commutes, we have $(-\lambda+1) d_{i-t}^i=0$ and
$\lambda d_{i-t}^{i+1}=0$. As the square on the left hand side
commutes, we have $\lambda d_{i-l}^{i-t+1}=0$. If pdim$M >0$,
then $d_{i-t}^{i} \not = 0$ as $\nu_A^{-1}(d_{i-t}^i)$ appears in a
minimal  projective resolution of $M$. Therefore $\lambda = -1$.
Then $ d_{i-t}^{i+1}=0$ or idim$M=0$ as in this case $I_{i+1}$
does not appear in the diagram. This is then equivalent to
condition (2). Furthermore we need $d_{i-l}^{i-t+1}=0$ or pdim$M=0$
which is equivalent to condition (1). If pdim$M=0$, then
$I_{i-t}$ and $I_{i-l}$ do not appear and therefore the diagram
commutes.

The bottom row is isomorphic to $ \nu (\Omega(M))[1]$ in $D^b(A)$. This is an indecomposable complex, as $\Omega(M)$ is indecomposable. Therefore $\cone(w) $ has an indecomposable summand isomorphic to $\nu (\Omega(M))[1]$.

\medskip

We have an injective map $s$ given by
\[ \xymatrix{ \cdots \ar[r] & I_{i-l} \ar[r]_{\binom{d}{0}}& I_{i-t+1}
\oplus I_{i-t}  \ar[r]_{\binom{-d,0}{d,d}} & I_{i+1} \oplus I_{i}
\ar[r]_{(d,0) } & I_{i+ \bar l -t+1} \ar[r] & \cdots\\
\cdots \ar[r] & 0 \ar[r] \ar@/_/[u]& 0 \ar@/_/[u] \ar[r]& I_{i+1}
\ar[r] \ar@/_/[u]_{\id} & I_{i+ l -t+1} \ar@/_/[u]_{\id} \ar[r] &
\cdots }\] The map $s$ is up to multiplication with non-zero
scalar the only injective map from the bottom row to the top row.

Suppose there is a map $h$ such that $h \circ s= \id$. Then $h$ is
of the form
\[ \xymatrix{ \cdots \ar[r] & I_{i-l} \ar[r]_{\binom{d}{0}}\ar[d] & I_{i-t+1}
\oplus I_{i-t} \ar[d]  \ar[r]_{\binom{-d,0}{d,d}} & I_{i+1} \oplus
I_{i} \ar[r]_{(d,0) }\ar[d]_{(\id,\lambda d)} & I_{i+ \bar l -t+1}
\ar[r] \ar[d]_{\id}& \cdots\\ \cdots \ar[r] & 0 \ar[r] & 0 \ar[r]&
I_{i+1} \ar[r] & I_{i+ l -t+1} \ar[r] & \cdots}\] where $\Lambda
\in k$. As in the first case, we need to determine under which conditions $h$ is a chain map. The middle square commutes if and only if $(\lambda-1)
d_i^{i+ \bar l-t+1}=0$ and $\lambda d_{i-t}^{i+1}=0$. The square
on the left hand side commutes if $\lambda d_i^{i+\bar l-t+1}=0$.
If idim$M >0$, then $d_i^{i+ \bar l-t+1} \not =0$. Therefore
$\lambda=1$. Then $d_{i-t}^{i+1}=0$ or idim$M=1$ and $d_i^{i+\bar
l-t+1}=0$ or pdim$M=0$, so condition (2) and (3) hold.

The bottom row is isomorphic to $\Omega^{-1}(M)$. As $A$ is a
Nakayama algebra, $\Omega^{-1}(M)$ is indecomposable. Therefore
the complex $\Omega^{-1}(M)$ is indecomposable.

\medskip

To prove the third part, we assume that there is a retraction $x$
from $\cone(w)$ to the given complex. The map $x$ and the map $y$
such that $y \circ x=\id$ can be taken as
\[ \xymatrix{ \cdots \ar[r] & I_{i-l} \ar[d] \ar[r]_{\binom{d}{0}}& I_{i-t+1}
\oplus I_{i-t} \ar[d]_{(\id,\lambda d)}
\ar[r]_{\binom{-d,0}{d,d}} & I_{i+1} \oplus I_{i}
\ar[r]_{(d,0) } \ar[d]_{(0,\id)} & I_{i+ \bar l -t+1} \ar[d] \ar[r] & \cdots \\
\cdots \ar[r]  & 0 \ar[r] \ar@/_/[u]&  I_{i-t+1}
\ar@/_/[u]_{\binom{\id}{0}} \ar[r]_d & I_{i} \ar[r]
\ar@/_/[u]_{\binom{\delta d}{\id} } & 0 \ar@/_/[u] \ar[r] &
\cdots}\] for some $\lambda, \delta \in k$. The square on the left
hand side of the map going down commutes if and only if $\lambda
d_{i-l}^{i-t+1}=0$. The middle square commutes if and only if
$(\lambda-1)d_{i-t}^i=0$. If pdim$M > 0$, then $d_{i-t}^i\not
=0$, so $\lambda =-1$ and we get that condition (1) is satisfied.

The square on the right hand side of the map going up commutes if
and only if $(\delta +1) d_{i-t+1}^{i+1} =0$ holds. The middle
square of the map going up commutes if and only if $\delta d_i^{i+
\bar l-t+1}=0$. If idim$M>0$, then $d_{i-t+1}^{i+1} \not =0$, so
$\delta=-1$ and $d_i^{i+ \bar l-t+1}=0$ or idim$M=1$. So
condition (2) holds.

Therefore the complex in the bottom row is a direct summand of
$\cone(w)$ if and only if (1) and (2) hold. The complex in the
bottom row is indecomposable by \ref{indecomposable}. This proves
part (a)-(d).

\smallskip

The complexes $\sigma^{\ge 1}(\cone(w))$ and $\sigma^{\le
-1}(\cone(w))$ are indecomposable in $Comp^{+,b}(\mathcal{I})$, as
the first one is a minimal injective resolution of an
indecomposable module and the second one is $\nu$ applied to a
minimal projective resolution of an indecomposable module.
Therefore the retractions presented in the previous three diagrams
are the only possibilities for direct summands. So if at most one
condition is satisfied, then $\cone(w)$ has to be indecomposable,
which proves part (e).

\end{proof}

We determine the number of predecessors of the simple $A$-modules
in the Auslander-Reiten quiver of $D^b(A)$.
\begin{lemma}\label{simple pre}
Let $S_i$ be a simple $A$-module and assume that $S_i$ is not projective and not injective. Then
$S_i$ has two predecessors in $\Gamma(D^b(A))$if and only if $d_{i-1}^{i+1}=0$. Otherwise $S_i$ has only one predecessor.
\end{lemma}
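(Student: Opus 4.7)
The plan is to apply Lemma \ref{pre} to $M=S_i$ with $t=1$, so that $l=l(P_i)$ and $\bar l=l(I_i)$, and then count the non-contractible direct summands of $\cone(w)$ according to which of the three conditions (1), (2), (3) of that lemma hold.

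Two preliminary observations cut down the case analysis. Condition (2) holds automatically: the composition factors of $I_i$ are $S_i,S_{i+1},\dots,S_{i+\bar l -1}$, so $S_{i+\bar l}$ is not among them and hence $\Hom_A(I_i,I_{i+\bar l})=0$, giving $d_i^{i+\bar l}=0$. Moreover, since $S_i$ is neither projective nor injective, both $\text{pdim}\,S_i\ge 1$ and $\text{idim}\,S_i\ge 1$, so condition (3) reduces to the single clause $d_{i-1}^{i+1}=0$.

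The rest is a case analysis on whether $d_{i-1}^{i+1}=0$. If $d_{i-1}^{i+1}\ne 0$ then (3) fails, and depending on whether (1) holds, either Lemma \ref{pre}(e) gives that $\cone(w)$ is indecomposable, or Lemma \ref{pre}(c) splits $\cone(w)$ as the contractible complex $I_i\xrightarrow{\id}I_i$ together with a complement that has a single indecomposable injective in each non-zero degree and is therefore indecomposable by Corollary \ref{indecomposable}. In either event $S_i$ has exactly one predecessor once the contractible part is killed in $D^b(A)$. If instead $d_{i-1}^{i+1}=0$, then (2) and (3) both hold; Lemma \ref{pre}(b) provides the summand $\Omega^{-1}(S_i)\cong I_i/S_i$, and I claim the complement is indecomposable and non-contractible. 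When (1) also holds this is immediate from Lemma \ref{pre}(d), with the two predecessors being $\nu(\rad P_i)[1]$ and $I_i/S_i$. When (1) fails, non-contractibility of the complement follows because the only contractible summand of $\cone(w)$ that could occur is $I_i\xrightarrow{\id}I_i$, and by the ``only if'' direction of Lemma \ref{pre}(c) this would require (1) and (2) together, contrary to assumption; indecomposability comes again from Corollary \ref{indecomposable} applied to the explicit form of $\cone(w)$. Thus $S_i$ has exactly two predecessors precisely when $d_{i-1}^{i+1}=0$.

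The main obstacle is the subcase $d_{i-1}^{i+1}=0$ with (1) failing, where the complement of $\Omega^{-1}(S_i)$ must be identified as a single second predecessor rather than something that decomposes further or dies in $D^b(A)$. Handling this cleanly requires combining the ``only if'' directions of parts (a), (b), (c) of Lemma \ref{pre} to forbid any further splitting off of the named summands, with Corollary \ref{indecomposable} controlling indecomposability from the explicit shape of $\cone(w)$ that appears in the proof of Lemma \ref{pre}.
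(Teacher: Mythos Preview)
Your approach is the same as the paper's---apply Lemma~\ref{pre} with $t=1$---and the argument is correct, but you do more work than needed. The paper observes that for $M=S_i$ condition~(1) is \emph{also} automatically satisfied, by the mirror of your argument for~(2): with $t=1$ condition~(1) reads $d_{i-l}^{\,i}=0$ or $\Pd S_i\le 1$, and since $l=l(P_i)$ the simple $S_{i-l}$ is not a composition factor of $P_i$, so $\Hom_A(P_{i-l},P_i)=0$, whence $\Hom_A(I_{i-l},I_i)\cong\Hom_A(P_{i-l},P_i)=0$ via $\nu_A^{-1}$ and $d_{i-l}^{\,i}=0$. Thus (1) and (2) always hold, condition~(3) reduces to $d_{i-1}^{i+1}=0$, and one is immediately in case~(d) or case~(c) of Lemma~\ref{pre}; all of your subcases with ``(1) failing'' are vacuous. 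Note also a small shift slip: the middle term of the triangle is $\cone(w)[-1]$, so the two predecessors are $\nu(\rad P_i)$ and $(I_i/S_i)[-1]$, not $\nu(\rad P_i)[1]$ and $I_i/S_i$ as you wrote.
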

\begin{proof}
With the notation of \ref{pre}, we have that $t=1$ and $\bar
l=l(I_i)$. Clearly condition $(1)$ and condition $(2)$ are always
satisfied. Therefore $S_i$ has two predecessors if and only if (3)
is satisfied. This is the case if and only if $d_{i-1}^{i+1}=0$.
Then $S_i$ has the two predecessor $\nu (\rad P_i)$ and
$I_i/S_i[-1]$. In all other cases $S_i$ has only one
predecessor.
\end{proof}
We compute an example.
\begin{exa}
Let $ 1\stackrel{\alpha} \to 2\stackrel{\beta} \to 3
\stackrel{\gamma} \to 4$ and $I=\langle \alpha \beta \gamma \rangle$. Then a
complete list of indecomposable left $A$-modules is given as
$P_1=S_1=\rad P_2$, $P_2=\rad P_3$, $P_3= I_1$, $M:= P_3/ P_1=\rad
P_4$, $P_4= I_2$, $S_2$, $S_3$, , $I_3=P_4/S_2$ and $I_4=S_4$.

As $P_1$ is simple, its Auslander-Reiten triangle is of the form

\[ P_3[-1] \to M[-1] \to P_1 \to P_3. \]

As $I_2=P_4$ is projective, we have

\[ P_4[-1] \to \nu^{-1}(i(M)) \to P_2 \to P_4. \]

Therefore the Auslander-Reiten triangles of $P_3$ and $P_4$ are of the form

\[ I_3[-1] \to \nu (p(M)) \to P_3 \to I_3  \mbox{ and} \]

\[ I_4[-1] \to M \to P_4 \to I_4. \]

By \ref{pre} the module $M$ satisfies the conditions $(1)$ and
$(2)$ and therefore its Auslander-Reiten-triangle is of the form
\[ \nu (M)[-1] \to I_4[-1] \oplus P_3 \oplus S_2 \to M. \]

The simple modules $S_2$ and $S_3$ have only one predecessor by
\ref{simple pre}. Their Auslander-Reiten triangles are of the form
\[ S_2 \to M \to S_3 \to S_2[1] \]

and \[\nu^{-1}(S_3)\to \nu (M)[-1] \to S_2 \to \nu^{-1} (S_3)[1]. \] The $\tau$-orbits
are the following:

1st orbit: $P_1$, $ P_3[-1]$, $ I_3[-2]$, $ P_1[-1]$

2nd orbit: $ P_4$, $I_4[-1]$, $ P_2 $, $ P_4[-1] $

3rd orbit: $ M$, $\nu(M)[-1] $, $ \nu^{-1}(M)$, $M[-1]$

4th orbit: $ S_3$, $S_2$, $\nu (S_2)[-1]$, $S_3[-1]$.

So all elements of $A$-mod lie in one component $\Z[D_4]$
where $D_4$ is the following quiver:
\[\xymatrix{ 1 \ar[rd] &&\\
& 3 \ar[r] & 4&\\
2 \ar[ru] & & }\] The number of the vertices of $D_4$ correspond
to the number of the $\tau$-orbits and represent their position in
the tree of the Auslander-Reiten component. Note also that the
shift $[-1]$ acts on the component as $\tau^4$.

By \cite[bounded]{S} it is clear that all indecomposable elements of
$K^b(\mathcal{P})$ belong to this component.
\end{exa}
We can determine the Auslander-Reiten quiver for some classes of Nakayama algebras.
\begin{theo}
Let $A$ be of global dimension $n-1$. Then the Auslander-Reiten quiver
is isomorphic to $\Z [ A_n]$.
In particular $[-1]$ is an involution on the tree class $A_n$ of $\Gamma(D^b(A))$. The algebra $A$ is
derived equivalent to $kA_n$.
\end{theo}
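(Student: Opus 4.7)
The plan is to compute the Auslander-Reiten triangles ending in each indecomposable $A$-module using Lemmas \ref{pre} and \ref{simple pre}, assemble the resulting component, and deduce the derived equivalence from its Dynkin tree class.

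Since $A$ has global dimension $n-1$, by the corollary preceding Lemma \ref{pre} the ideal $I$ is generated by all paths of length two, so every indecomposable $A$-module has Loewy length at most two. The indecomposables are therefore the $n$ simples $S_1,\ldots,S_n$ together with the $n-1$ length-two projective-injective modules $P_i=I_{i-1}$ for $2\le i\le n$; one checks $\mathrm{pdim}(S_i)=i-1$ and $\mathrm{idim}(S_i)=n-i$. The key numerical input is that $d_i^j=0$ whenever $j\ge i+2$, since the composition factors of $I_i$ and $I_j$ are then disjoint; in particular $d_{i-1}^{i+1}=0$ for every $2\le i\le n-1$. With this in hand: (i) for an interior simple $S_i$ with $2\le i\le n-1$, Lemma \ref{simple pre} gives two predecessors in $\Gamma(D^b(A))$; (ii) for the extreme simples $S_1$ and $S_n$, Lemma \ref{middle part P} (and its dual) yields a unique predecessor, respectively successor; (iii) for each projective-injective $P_i$, Lemma \ref{pre} applies with $\mathrm{pdim}(P_i)=\mathrm{idim}(P_i)=0$, so the summands $\nu(\Omega(P_i))[1]$ and $\Omega^{-1}(P_i)$ vanish in case (d) and the middle term reduces to the indecomposable two-term complex $[\,I_{i-1}\to I_i\,]$, giving again a unique predecessor.

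Piecing together these AR-triangles with their $\tau$-translates exhibits a single Auslander-Reiten component containing all $2n-1$ indecomposable modules, whose $n$ $\tau$-orbits (one per simple $S_i$) are linked by irreducible maps in the pattern of the $A_n$-diagram. Since $A$ has finite global dimension we have $D^b(A)=K^b(\mathcal{P})$, and every indecomposable admits an AR-triangle by Theorem \ref{existence}, so this component exhausts $\Gamma(D^b(A))$. Thus $\Gamma(D^b(A))\cong\Z[A_n]$. The Dynkin tree class $A_n$ forces $l_p$ to be bounded, so by \cite[4.15]{S} the algebra $A$ is derived equivalent to $kQ$ for some Dynkin quiver $Q$ on $n$ vertices, and the $A_n$-shape of the component pins $Q=A_n$. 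The involution of $[-1]$ on the tree class then follows from the standard Coxeter-combinatorial description of $\Gamma(D^b(kA_n))=\Z[A_n]$.

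The main obstacle is showing that the local computation in Step 2 really produces a connected graph of shape $A_n$, rather than several disjoint Dynkin pieces. This requires careful tracking, via Corollary \ref{indecomposable}, of the indecomposability of every cone produced by Lemma \ref{pre}; of the precise source and target of each irreducible map emanating from the length-two projective-injectives; and of the linkage between the $\tau$-orbit of $S_i$ and that of $S_{i\pm1}$ through the predicted middle-term decompositions. With that bookkeeping under control, the remaining steps are routine applications of the cited results.
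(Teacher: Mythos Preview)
Your local computations via Lemmas \ref{simple pre} and \ref{pre} are sound and follow the paper's strategy, but there is a genuine gap in the step where you pass from ``one component containing all modules'' to ``one component containing all of $\Gamma(D^b(A))$''.

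You write: ``Since $A$ has finite global dimension we have $D^b(A)=K^b(\mathcal{P})$, and every indecomposable admits an AR-triangle by Theorem \ref{existence}, so this component exhausts $\Gamma(D^b(A))$.'' This is a non sequitur. Theorem \ref{existence} only guarantees that every indecomposable object lies in \emph{some} stable component; it says nothing about the number of components. A tame hereditary algebra, for instance, also has finite global dimension, yet $\Gamma(D^b(A))$ has infinitely many components. Knowing that the $2n-1$ indecomposable \emph{modules} sit in one component does not exclude further components consisting entirely of genuine complexes.

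The paper closes this gap by computing the $\tau$-action explicitly: one checks that $\tau^s(S_i)$ is the complex $[I_s\to\cdots\to I_{i+s-1}]$ shifted by $i-s-1$, whence $\tau^{n+1}(S_i)\cong S_i[-2]$ for all $i$. Thus $[-2]$ acts as $\tau^{n+1}$ on the component, so $l_p$ is bounded there, and then \cite[4.9]{S} forces the component to be all of $\Gamma(D^b(A))$. Your proposal contains neither the periodicity calculation nor the appeal to \cite[4.9]{S}, and without them the conclusion does not follow.

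A secondary remark: the paper's explicit formula $\tau^s(S_1)=I_s[-s]$ shows at once that every projective-injective $P_i=I_{i-1}$ already lies in the $\tau$-orbit of $S_1$. This makes the $A_n$ shape immediate from the predecessors of the simples alone (the sectional path $S_n[-n+1]\to\cdots\to S_2[-1]\to S_1$), and entirely bypasses the separate analysis of the $P_i$ and the ``bookkeeping'' you flag as the main obstacle.
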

\begin{proof}
Let $ 1<i <n$, then $S_i$ is non-projective and non-injective.
Furthermore by \ref{simple pre} the simple module $S_i$ has the
two predecessors $\tau(S_{i-1})[1]$ and $S_{i+1}[-1]$.  We also
know by  by \ref{pro pre} that $S_1$ has the only predecessor
$S_2[-1]$ and $S_n$ has the
 only predecessor $\tau(S_{n-1})[1]$. So every complex in this
component lies in the $\tau$-orbit of a stalk complex of a simple
module up to shift.

Next we show that all complexes in the component are shift periodic. By direct computation we have
that $\tau^s(S_i)$ is isomorphic to
the complex \[ \cdots \to 0 \to I_s \to \cdots \to I_{i+s-1} \to 0
\to \cdots[i-s-1].\]Therefore $\tau^s(S_i)$ has two non-zero
homologies except for $s=n-i+1$ where
$\tau^{n-i+1}(S_i)=S_{n-i+1}[-n+2i-2]$ and $s=n+1$ where
$\tau^{n+1}(S_i)=S_i[-2]$. The $\tau $-orbit of $S_1$ is given by
$ \tau^s(S_1)= I_s[-s]$. Therefore a shift of all projective
indecomposable modules and of all injective indecomposable modules are in
the orbit of $S_1$. Also $\tau^n(S_1)=S_n[-n]$ and
$\tau(S_n)=S_1[n-2]$. Therefore $\tau^{n+1}(S_i)=S_i[-2]$ for all
$1 \le i \le n$. The above computation also shows that the $\tau$-orbits of non-isomorphic simple modules are distinct.
Therefore a maximal sectional
path is given by

\[ \xymatrix{ S_n[-n+1] \ar[r] & \cdots \ar[r]& S_2[-1] \ar[r] & S_1.}\]

The Auslander-Reiten component is therefore isomorphic to $\Z[A_n]$ given as above.
As the $[2]$ shift operates on $\tau$-orbits, we can see that all
elements in the component are of bounded length. By \cite[4.9]{S}
all indecomposable elements are part of the component and by
\cite[4.15]{S} the algebra $A$ is derived equivalent to $kA_n$.
\end{proof}
We investigate another class of Nakayama algebras.
\begin{theo}
Let $A:=k A_n/I$ with $n\ge 4$ and $I$ is generated by the path of
length $n-1$. Then the Auslander-Reiten quiver of $D^b(A)$ is
isomorphic to $\Z[D_n]$. If $n$ is even, then $[-1]$ acts as the
identity on $D_n$. If $n$ is odd $[-1]$ acts as the involution on
$D_n$. Furthermore $A$ is derived equivalent to $kD_n$.
\end{theo}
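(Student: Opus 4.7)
The plan is to determine the AR quiver by applying Lemma \ref{pre} to identify the branch vertex, trace the $\tau$-orbits of the simple modules to see that they assemble into the shape of $D_n$, and conclude derived equivalence via the length-function criterion of \cite[4.15]{S}. First I would record the basic module data: the relation kills the unique path of length $n-1$, so $P_{n-1} \cong I_1$ and $P_n \cong I_2$ are both projective-injective of length $n-1$, while $P_1 = S_1$ and $I_n = S_n$ are simple; all other $P_i$ are non-injective and all other $I_j$ non-projective.

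Next I would identify the branch vertex of the component as $M := \rad P_n$, the uniserial module of length $n-2$ with top $S_{n-1}$ and socle $S_2$. For $M$ the parameters of Lemma \ref{pre} become $i = n-1$, $t = n-2$, $l = \bar l = n-1$, and all three conditions hold: $d_0^2=0$ and $d_{n-1}^{n+1}=0$ trivially, and $d_1^n=0$ because $I_1 = P_{n-1}$ does not have $S_n$ as a composition factor. By Lemma \ref{pre}(d), $M$ has three predecessors in $\Gamma(D^b(A))$, namely (up to shifts) $\Omega^{-1}(M) = S_n$, $\nu(\Omega M) = I_1 = P_{n-1}$, and the complex $I_2 \to I_{n-1}$, which is quasi-isomorphic to a certain uniserial module. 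A parallel case analysis using Lemma \ref{simple pre} and Lemma \ref{pre} applied to all other indecomposables shows that each of them has at most two predecessors, so $M$ is up to shift the unique branch vertex of its component.

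For the orbit structure, the key computation is that for $1 < i < n-1$ the injective resolution $0 \to S_i \to I_i \to I_{i+1} \to 0$ yields $\tau^{-1}(S_i) \simeq S_{i+1}$ in $D^b(A)$ (because the two-term complex $P_i \to P_{i+1}$ obtained by applying $\nu_A^{-1}$ is quasi-isomorphic to the stalk $S_{i+1}$, as $P_i \cong \rad P_{i+1}$). Consequently $S_2, \dots, S_{n-1}$ all lie on a single $\tau$-orbit, which forms the trunk of the $D_n$-tree emanating from the branch vertex at $M$. Direct analogous computations with the projective-injective pairs handle the behaviour of $S_1$ and $S_n$, each providing its own $\tau$-orbit forming a short arm incident to $M$. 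Together with the orbit through $M$ this accounts for exactly $n$ $\tau$-orbits, and they assemble into the shape of $\Z[D_n]$.

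It remains to rule out further components and to determine the action of $[-1]$. On this component $l_p$ is bounded (an explicit bound falls out of the orbit descriptions, since each orbit consists of complexes of bounded length over $\mathcal{P}$), so by \cite[4.15]{S} the algebra $A$ is derived equivalent to $kQ$ for some finite Dynkin quiver $Q$; the tree class $D_n$ then forces $Q = D_n$, and the entire AR quiver is $\Z[D_n]$. For the parity statement about $[-1]$, one tracks $\tau$-powers of $S_1$ and compares with the orbit of $S_n$: explicit computation gives that $\tau^{n-1}$ sends the $S_1$-orbit either to itself (when $n$ is even) or to the $S_n$-orbit (when $n$ is odd), which translates into the stated action of $[-1]$ on the $D_n$-tree. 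The main obstacle will be the bookkeeping to verify that the computed $n$ $\tau$-orbits genuinely close up into exactly $\Z[D_n]$ with no further components, together with the parity analysis determining the action of the shift on the two leaves.
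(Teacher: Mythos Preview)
Your overall strategy matches the paper's: locate the branch vertex via Lemma~\ref{pre}, trace the $\tau$-orbits of the simples, and invoke \cite[4.9, 4.15]{S}. You have also correctly identified $M = \rad P_n = P_{n-1}/\rad^{n-2}P_{n-1}$ as the branch vertex and computed its three predecessors.

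There is, however, a genuine gap in your orbit bookkeeping. In a component $\Z[T]$ each $\tau$-orbit corresponds to a \emph{single} vertex of the tree $T$, not to an entire arm. You list only four orbits --- those of $S_2$ (which indeed coincides with the orbit of every $S_i$ for $2\le i\le n-1$), of $S_1$, of $S_n$, and of $M$ --- and then assert that ``this accounts for exactly $n$ $\tau$-orbits''. That is only true when $n=4$. For general $n$ the long arm of $D_n$ has $n-2$ vertices, so you are missing $n-4$ orbits strictly between the leaf at $S_2$ and the branch vertex at $M$. The paper fills this in by looking at the modules $M_s := P_s/\rad^{s-1}P_s$ for $2\le s\le n-1$ (so $M_2=S_2$ and $M_{n-1}=M$, and your third predecessor $I_2\to I_{n-1}$ is exactly $M_{n-2}$): a second application of Lemma~\ref{pre} shows each $M_s$ with $2<s<n-1$ has precisely the two predecessors $M_{s-1}$ and $\tau^{-1}(M_{s+1})$, yielding the sectional path
\[
S_2 = M_2 \to M_3 \to \cdots \to M_{n-1}=M,
\]
with the short leaves $S_1[1]$ and $S_n[-1]$ attached at $M_{n-1}$. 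Without identifying these intermediate $M_s$-orbits (and checking that they are pairwise distinct and distinct from the orbits of $S_1$ and $S_n$), your argument does not actually exhibit the $D_n$ shape for $n>4$, nor does it give the bound on $l_p$ needed to apply \cite[4.9]{S}.
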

\begin{proof}
Let $1 < i< n$, then the projective resolution of $S_i$ is given
by $0 \to P_{i-1} \to P_i$ and the injective resolution by $I_i
\to I_{i+1}\to 0$. Therefore $\tau(S_i)= S_{i-1} $ for $2 <i< n$,
$\tau(S_2) = \cdots \to 0 \to I_1 \to I_2 \to 0 \to \cdots$ and
$\tau^2(S_2)=S_{n-1}[-1]$. All non-injective and
non-projective simple modules are in the same $\tau$-orbit and $[-1]$
operates on that orbit. By \ref{simple pre} each $S_i$ has exactly
one predecessor and it is of the form \[ \cdots \to 0 \to I_{i-1} \to I_{i+1}
\to 0 \to \cdots \] which is isomorphic to the stalk complex with stalk $P_i/\rad^2(P_i)$ for $
i>2$.

We note that $I_i$ has projective resolution $0\to P_1 \to P_{i-1}
\to P_n$ for $i>2$. Therefore $\tau(I_i)=P_{i-2}[1]$ for $i>2$,
$\tau(I_2)=I_n[-1]$ and $\tau(I_1)=I_{n-1}[-1]$.

Suppose now that $n$ is even, then the orbit of $S_1=P_1$ is given
by $P_1,$ $I_1[-1],$ $I_{n-1}[-2],$ $P_{n-3}[-1],$ $ \cdots,$
$I_{n-(2k+1)}[-2],$ $ P_{n-(2k+3)}[-1],$ $ \cdots,$ $P_1[-1]$. In this case
the orbit of $S_n=I_n$ is given by $S_n,$ $P_{n-2}[1]$, $I_{n-2},$
$\cdots,$ $I_{n-2k},$ $P_{n-2k-2}[1],$ $\cdots,$ $I_2,$ $S_n[-1]$.

Suppose now that $n $ is odd. Then the orbit of $P_1$ is given by
$P_1,$ $I_1[-1],$ $I_{n-1}[-2],$ $ P_{n-3}[-1],$ $\cdots,$
$I_{n-(2k+1)}[-2],$ $P_{n-(2k+3)}[-1],$ $ \cdots,$ $I_2[-2],$ $S_n[-3],$
$P_{n-2}[-2]$, $I_{n-2}[-3]$, $\cdots,$ $I_{n-2k}[-3],$ $P_{n-2k-2}[-2],$
$\cdots,$ $P_1[-2]$ and contains the odd shifts of $S_n$.

So $S_n$ and $S_1$ lie in different orbits. If $n$ is even [-1]
operates on each orbit. If $n$ is odd, $[-2]$ operates on the
orbits and [-1] maps the orbit of $S_1$ onto the orbit of $S_n$.

The only predecessor of $S_1$ is given by
$P_{n-1}/\rad^{n-2}(P_{n-1})[-1]$.

Next we investigate the predecessors of modules
$M_s:=P_s/\rad^{s-1}(P_s)$ for $n-1 \ge  s \ge 2$. The projective
resolution of $M_s$ is given by $0 \to P_1 \to P_s$ and the
injective resolution of $M_s$ is given by $I_2 \to I_{s+1}\to 0$.
Therefore (1) and (2) are always satisfied. We have $d_1^{s+1} =0$
if and only if $s=n-1$. So $M_s$ has three predecessor if and only
if $s=n-1$ and else it has only two predecessor. By the proof of
\ref{pre}, the predecessors of $M_{n-1}$ are $S_n[-1]$,
$\tau(S_1[1])$ and $M_{n-2}$. The predecessors of $M_s$ for $2< s<
n-1$ are $M_{s-1}$ and $\tau^{-1}( M_{s+1})$. Therefore a maximal
sectional path of the Auslander-Reiten component looks as follows

\[\xymatrix{  & & & & S_1[1] \\
S_2\ar[r] & M_3 \ar[r] & \cdots \ar[r] & M_{n-1}
\ar[ur]  & \\
& & & & S_n \ar[lu] [-1] } \]

The component is isomorphic to $ \Z[ D_n]$.
Furthermore we know that $[-2]$ acts on the $\tau$-orbit of $S_i$,
$S_n$ and $S_1$. Therefore $[2]$ acts as the identity on all
$\tau$-orbits of the component. By \cite[4.9]{S} this component is
the only Auslander-Reiten component of $D^b(A)$ and by
\cite[4.15]{S} the algebra $A$ is derived equivalent to $kA_n$.
\end{proof}


\begin{thebibliography}{Mon}
\bibitem[ASS]{ASS}I.\ Assem, D.\ Simson and A.\ Skowro\'nski, \emph{Elements of the representation theory of associative algebras}, London Mathematical Society Student Texts, 65.\ Cambridge Univ.\ Press, Cambridge, 2006.
\bibitem[BGS]{BGS} G.\ Bobiski, C.\ Gei, A.\ Skowroski, \emph{Classification of discrete derived categories}, Cent.\  Eur.\ J.\ Math.\  2  (2004),  no.\ 1, 19-49.
\bibitem[H1]{H1} D.\ Happel, \emph{Triangulated Categories in the representation theory of finite-dimensional algebras}, London Mathematical Society Lecture Note Series, 119.\ Cambridge University Press, Cambridge, 1988.
\bibitem[H2]{H} D.\ Happel, \emph{ Auslander-Reiten Triangles in Derived categories of Finite-Dimensional Algebras}, Proc.\ Amer.\ Math.\ Soc.\  112  (1991),  no.\ 3, 641-648.
\bibitem[HPR]{HPR} D.\ Happel, U.\ Preiser and C.\ M.\ Ringel, \emph{Vinberg's characterization of Dynkin diagrams using
subadditive functions with application to $D{\rm Tr}$-periodic
modules}. Representation Theory II,\ Lecture Notes in Math.\, 832
(1980), Springer, Berlin, 1980, 280-294.
\bibitem[Ri]{Rie} C.\ Riedtmann, \emph{Algebren, Darstellungsköcher, Überlagerungen und zurück}, Comment.\ Math.\ Helv.\  55  (1980), no.\ 2, 199-224.
\bibitem[S]{S} S.\ Scherotzke, \emph{Finite and bounded Auslander-Reiten components in the derived category}, archiv:   . 
\bibitem[We]{Web}P.\ Webb, \emph{The Auslander-Reiten quiver of a finite group}.
Math.\ Z.\ 179 (1982), no.\ 1, 97-121.
\bibitem[Wei]{We}C.\ Weibel, \emph{ An introduction to homological algebra}, Cambridge Studies in Advanced Mathematics, 38.\ Cambridge University Press, Cambridge, 1994.
\bibitem[W]{W}W.\ Wheeler, \emph{ The triangulated structure of the stable derived category},
J.\ Algebra 165 (1994), no.\ 1, 23-40.
\bibitem[V]{V} D.\ Vossieck, \emph{ The algebras with discrete derived
category} J.\ Algebra 243 (2001), no.\ 1, 168-176.
\bibitem[XZ]{XZ} J.\ Xiao, B.\ Zhu, \emph{ Locally finite triangulated categories}, J.\ Algebra 290 (2005), no.\ 2, 473-490.
\end{thebibliography}
\end{document}